\definecolor{mahogany}{cmyk}{0, 0.77, 0.87, 0}
\definecolor{salmon}{cmyk}{0, 0.53, 0.38, 0}
\definecolor{melon}{cmyk}{0, 0.46, 0.50, 0}
\definecolor{yellowgreen}{cmyk}{0.44, 0, 0.74, 0}
\definecolor{brickred}{cmyk}{0, 0.89, 0.94, 0.28}
\definecolor{OliveGreen}{cmyk}{0.64, 0, 0.95, 0.40}
\definecolor{RawSienna}{cmyk}{0, 0.72, 1.0, 0.45}
\definecolor{ZurichRed}{rgb}{1, 0, 0} 
\begin{document}

\newtheorem{lemma}[thm]{Lemma}
\newtheorem{proposition}{Proposition}
\newtheorem{theorem}{Theorem}[section]
\newtheorem{deff}[thm]{Definition}
\newtheorem{case}[thm]{Case}
\newtheorem{prop}[thm]{Proposition}
\newtheorem{example}{Example}

\newtheorem{corollary}{Corollary}

\theoremstyle{definition}
\newtheorem{remark}{Remark}

\numberwithin{equation}{section}
\numberwithin{definition}{section}
\numberwithin{corollary}{section}

\numberwithin{theorem}{section}

\numberwithin{remark}{section}
\numberwithin{example}{section}
\numberwithin{proposition}{section}

\newcommand{\gap}{\lambda_{2,D}^V-\lambda_{1,D}^V}
\newcommand{\gapR}{\lambda_{2,R}-\lambda_{1,R}}
\newcommand{\bD}{\mathrm{I\! D\!}}
\newcommand{\calD}{\mathcal{D}}
\newcommand{\calA}{\mathcal{A}}

\newcommand{\conjugate}[1]{\overline{#1}}
\newcommand{\abs}[1]{\left| #1 \right|}
\newcommand{\cl}[1]{\overline{#1}}
\newcommand{\expr}[1]{\left( #1 \right)}
\newcommand{\set}[1]{\left\{ #1 \right\}}

\newcommand{\calC}{\mathcal{C}}
\newcommand{\calE}{\mathcal{E}}
\newcommand{\calF}{\mathcal{F}}
\newcommand{\Rd}{\mathbb{R}^d}
\newcommand{\BR}{\mathcal{B}(\Rd)}
\newcommand{\R}{\mathbb{R}}
\newcommand{\al}{\alpha}
\newcommand{\RR}[1]{\mathbb{#1}}
\newcommand{\bR}{\mathrm{I\! R\!}}
\newcommand{\ga}{\gamma}
\newcommand{\om}{\omega}
\newcommand{\A}{\mathbb{A}}
\newcommand{\bH}{\mathbb{H}}

\newcommand{\bb}[1]{\mathbb{#1}}
\newcommand{\bI}{\bb{I}}
\newcommand{\bN}{\bb{N}}

\newcommand{\uS}{\mathbb{S}}
\newcommand{\M}{{\mathcal{M}}}
\newcommand{\calB}{{\mathcal{B}}}

\newcommand{\W}{{\mathcal{W}}}

\newcommand{\m}{{\mathcal{m}}}

\newcommand {\mac}[1] { \mathbb{#1} }

\newcommand{\bC}{\Bbb C}

\newtheorem{rem}[theorem]{Remark}
\newtheorem{dfn}[theorem]{Definition}
\theoremstyle{definition}
\newtheorem{ex}[theorem]{Example}
\numberwithin{equation}{section}

\newcommand{\Pro}{\mathbb{P}}
\newcommand\F{\mathcal{F}}
\newcommand\E{\mathbb{E}}
\newcommand\e{\varepsilon}
\def\H{\mathcal{H}}
\def\t{\tau}

\newcommand{\blankbox}[2]{%
  \parbox{\columnwidth}{\centering
    \setlength{\fboxsep}{0pt}%
    \fbox{\raisebox{0pt}[#2]{\hspace{#1}}}%
  }%
}
\title[Bellman function]{On the Bellman function of Nazarov, Treil and Volberg}

\author{Rodrigo Ba\~nuelos}\thanks{R. Ba\~nuelos is supported in part  by NSF Grant
\# 0603701-DMS}
\address{Department of Mathematics, Purdue University, West Lafayette, IN 47907, USA}
\email{banuelos@math.purdue.edu}
\author{Adam Os\c ekowski}\thanks{A. Os\c ekowski is supported in part by the NCN grant DEC-2012/05/B/ST1/00412.}
\address{Department of Mathematics, Informatics and Mechanics, University of Warsaw, Banacha 2, 02-097 Warsaw, Poland}
\email{ados@mimuw.edu.pl}

\begin{abstract}
We give an explicit formula for the Bellman function associated with the dual bound related to the unconditional constant of the Haar system.
\end{abstract}

\maketitle

\section{Introduction}
Let $\mathfrak{h}=(\mathfrak{h}_n)_{n\geq 0}$ denote the standard Haar system on $[0,1)$. Recall that this family of functions is given by 
\begin{align*}
&\mathfrak{h}_0=[0,1), && \mathfrak{h}_1=[0,1/2)-[1/2,1),\\
&\mathfrak{h}_2=[0,1/4)-[1/4,1/2), && \mathfrak{h}_3=[1/2,3/4)-[3/4,1),\\
&\mathfrak{h}_4=[0,1/8)-[1/8,1/4),&& \mathfrak{h}_5=[1/4,3/8)-[3/8,1/2),\\
&\mathfrak{h}_6=[1/2,5/8)-[5/8,3/4),&& \mathfrak{h}_7=[3/4,7/8)-[7/8,1), \,\,\ldots
\end{align*}
where we have identified a set with its indicator function. A classical result of Schauder \cite{Sch} states that the Haar system forms a basis of $L^p=L^p(0,1)$, $1\leq p<\infty$ (with the underlying measure being the Lebesgue measure). That is, for every $f\in L^p$ there is a unique sequence $a=(a_n)_{n\geq 0}$ of real numbers satisfying $||f-\sum_{k=0}^n a_k\mathfrak{h}_k||_p\to 0$. Let $\beta_p(\mathfrak{h})$ be the unconditional constant of $\mathfrak{h}$, i.e. the least extended real number $\beta$ with the following property: if $n$ is a nonnegative integer and $a_0,\,a_1,\,\ldots,\,a_n$ are real numbers such that $||\sum_{k=0}^n a_k \mathfrak{h}_k||_p\leq 1$, then
\begin{equation}\label{pp}
 \left|\left|\sum_{k=0}^n \e_k a_k \mathfrak{h}_k\right|\right|_p\leq \beta 
\end{equation}
for all choices of signs $\e_0,\,\e_1,\,\ldots,\,\e_n$. Using Paley's inequality \cite{P}, Marcinkiewicz \cite{M} proved that $\beta_p(\mathfrak{h})<\infty$ if and only if $1<p<\infty$. The precise value of $\beta_p(\mathfrak{h})$ was determined by Burkholder in \cite{Bu1}: we have
\begin{equation}\label{bet}
 \beta_p(\mathfrak{h})=p^*-1,\qquad 1<p<\infty,
\end{equation}
where $p^*=\max\{p,p/(p-1)\}$. Actually, the constant remains the same if we allow the coefficients $a_0$, $a_1$, $a_2$, $\ldots$ to take values in a Hilbert space $\mathcal{H}$ (cf. \cite{Bu2}). This result can be further generalized: if $(a_n)_{n\geq 0}$, $(b_n)_{n\geq 0}$ are sequences with $\mathcal{H}$-valued terms satisfying $|a_n|\leq |b_n|$ for each $n$, then
\begin{equation}\label{inh}
 \left|\left|\sum_{k=0}^n a_k\mathfrak{h}_k\right|\right|_p\leq (p^*-1)\left|\left|\sum_{k=0}^n b_k\mathfrak{h}_k\right|\right|_p,\qquad n=0,\,1,\,2,\,\ldots,\quad 1<p<\infty,
\end{equation}
and the constant $p^*-1$ cannot be replaced by a smaller one. 
 The original proof of this fact exploits the properties of a certain special functions, the associated Bellman function (for details, see Burkholder \cite{Bu1, Bu2, Bu3}).  Burkholder's sharp martingale inequalities have been widely sued to obtain tight bounds for a large class of operators, including many classical Fourier multipliers.  See the \cite{BanOse} and the many references contained therein.  

In the nineties, Nazarov, Treil and Volberg (cf. \cite{NT} and a preprint version of \cite{NTV}) proposed a different, dual approach to the above $p^*-1$ problems. Namely, they proved that \eqref{bet}, \eqref{inh} can be deduced from the existence of a function $B_p$ defined on the set 
$$\mathcal{D}=\big\{(\zeta,\eta,Z,H)\in\mathcal{H}\times \mathcal{H}\times [0,\infty)\times [0,\infty):Z\geq |\zeta|^p,\,H\geq |\eta|^q\big\},$$ 
satisfying the following two conditions:
\begin{itemize}
\item[(I)] We have $0\leq B_p(\zeta,\eta,Z,H)\leq (p^*-1)Z^{1/p}H^{1/q}$ on $\mathcal{D}$.
\item[(II)] For any $a_\pm=(\zeta_\pm,\eta_\pm,Z_\pm,H_\pm)\in \mathcal{D}$, we have the concavity-type condition
$$ B_p\left(\frac{a_-+a_+}{2}\right)-\frac{B_p(a_-)+B_p(a_+)}{2}\geq \left|\frac{\zeta_+-\zeta_-}{2}\right|\left|\frac{\eta_+-\eta_-}{2}\right|.$$
\end{itemize}
The existence of such a function can be extracted from Burkholder's works \cite{Bu1} and \cite{Bu2} via a dual formulation. As shown later by Nazarov and Volberg \cite{NV} and Dragi\v{c}evi\'c and Volberg \cite{DV}, this special object can be further exploited to yield interesting tight $L^p$ bounds for Riesz transforms in the classical setting and in the setting of the Ornstein--Uhlenbeck semigroup. 

Finding explicit formulas for Bellman functions is in general a rather nontrivial matter and there is an intriguing question about an explicit formula for ${B}_p$.  What is even more surprising is this case is that  this problem has been solved thus  far only in the particular case $p=2$ where the explicit expression is very easy to obtain. Indeed, for this value of the parameter $p$, Nazarov, Treil and Volberg \cite{NT,NV} showed that
\begin{equation}\label{p=2}
 \mathbb{B}_2(\zeta,\eta,Z,H)=\sqrt{(Z-|\zeta|^2)(H-|\eta|^2)}
\end{equation}
works just fine. 
The paper \cite{NT} contains also some attempts to find $B_p$ explicitly for other values of $p$, but with no success. Nevertheless, the authors managed to construct, for each $1<p<\infty$, a function which satisfies (II) and a version of (I), in which $p^*-1$ is replaced by a slightly larger constant. 
The purpose of this paper is to fill this gap and give an explicit formula for $B_p$ satisfying (I) and (II), for all $1<p<\infty$.  While at this point we do not have an application that takes advantage of this explicit expression, we believe such applications do exist.  For example, the upper bound estimate $1.575(p^*-1)$ for the norm of the Beurling-Ahlfors operator given by the first author and Janakiraman in \cite{BanJan}  arose from calculation with the explicit Bellman function discovered by Burkholder in the solution of the martingale transform problem.  While the bound $2(p^*-1)$ can be obtained from the existence of Burkholder's Bellman function, the better  $1.575(p^*-1)$ bound require the explicit expression.  This leads us to believe that, in the same way, the explicit expression for the Nazarov-Treil-Volberg Bellman function should lead to an improvement of the arguments in \cite{NV} which may yield a better bound.  

Suppose that $1<p\leq 2$ and introduce the function $\mathcal{B}_p:\mathcal{D}\to \R$ as follows.  If $|\eta|^qZ\geq |\zeta|^pH$, then
$$ \mathcal{B}_p(\zeta,\eta,Z,H)=\frac{(H-|\eta|^q)^{1/q}(Z-|\zeta|^p)^{1/p}}{p-1}.$$
On the other hand, if $|\eta|^qZ< |\zeta|^pH$, then
$$ \mathcal{B}_p(\zeta,\eta,Z,H)=\gamma Z^{1/p}H^{1/q}-|\zeta||\eta|Y,$$
where $(\gamma,Y)$, $0\leq Y<\gamma<(p-1)^{-1}$ is the unique solution to the system of equations
\begin{equation}\label{intr}
\frac{(1-(p-1)Y)(1+Y)^{p-1}}{(1-(p-1)\gamma)(1+\gamma)^{p-1}}=\frac{Z}{|\zeta|^p},\qquad \frac{Y(1+Y)^{p-2}}{\gamma(1+\gamma)^{p-2}}=\left(\frac{|\eta|^qZ}{|\zeta|^pH}\right)^{1/q}.
\end{equation}
The existence and  uniqueness of the pair $(\gamma,Y)$ will be shown later.  (See Lemma \eqref{auxilem} below.) 

Here is the precise statement of our main result. Throughout this paper,  $q=p/(p-1)$ denotes the conjugate exponent of $p$.

\begin{theorem}\label{mainthm}
For any $1<p\leq 2$, the function $\mathcal{B}_p$ satisfies (I) and (II). If $p>2$, then the function $(\zeta,\eta,Z,H)\mapsto \mathcal{B}_q(\eta,\zeta,H,Z)$ satisfies (I) and (II).
\end{theorem}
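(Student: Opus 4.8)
The plan is to verify conditions (I) and (II) directly for $\mathcal{B}_p$ in the range $1<p\leq 2$, and then deduce the case $p>2$ by the duality/symmetry built into the formula. I would begin by recording the homogeneity of $\mathcal{B}_p$: it is homogeneous of degree $1/p$ in the pair $(|\zeta|^p,Z)$ and of degree $1/q$ in $(|\eta|^q,H)$ jointly, and it depends on $\zeta,\eta$ only through $|\zeta|,|\eta|$. Using this I would reduce the whole problem to a planar question. Normalizing (say $Z=1$) and writing $x=|\zeta|\in[0,1]$, $s=|\eta|^q H^{-1}\in[0,1]$, the function collapses to a function of two scalar variables; condition (I) becomes the two-sided bound $0\le \mathcal{B}_p\le (p-1)^{-1}$ in these normalized coordinates, which on the region $|\eta|^qZ\ge|\zeta|^pH$ is immediate from the explicit product formula, and on the other region follows once one checks that the solution $(\gamma,Y)$ of \eqref{intr} indeed satisfies $0\le Y<\gamma<(p-1)^{-1}$ (this is exactly the content promised in Lemma \ref{auxilem}, which I may assume). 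The monotonicity/limiting behavior of the two defining equations as $(\gamma,Y)$ ranges over the admissible triangle should also show that the two pieces of $\mathcal{B}_p$ match continuously along the interface $|\eta|^qZ=|\zeta|^pH$, and with matching first derivatives, so that $\mathcal{B}_p$ is $C^1$ there.

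For condition (II), the standard Bellman-function technique applies: since the inequality must hold for all chords $a_-,a_+\in\mathcal{D}$, it suffices to establish the infinitesimal (differential) version, namely that the Hessian $d^2\mathcal{B}_p$ of $\mathcal{B}_p$, viewed as a quadratic form in the increments $(d\zeta,d\eta,dZ,dH)$ restricted to the directions compatible with staying in $\mathcal{D}$, satisfies $-\tfrac12 d^2\mathcal{B}_p(dt)\ge |d\zeta||d\eta|$, and then to integrate this along the segment joining $a_-$ to $a_+$ (a two-dimensional integration as in Nazarov--Treil--Volberg, handling the boundary $Z=|\zeta|^p$ or $H=|\eta|^q$ where $\mathcal{D}$ has corners, and the interface where $\mathcal{B}_p$ switches formula). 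Concretely, one computes $d^2\mathcal{B}_p$ on each of the two regions. On $|\eta|^qZ\ge|\zeta|^pH$ the function is an explicit product of powers, so the Hessian computation is elementary calculus; on $|\eta|^qZ<|\zeta|^pH$ one differentiates the relations \eqref{intr} implicitly to express $d\gamma,dY$ in terms of the increments of $Z,H,|\zeta|,|\eta|$, and substitutes. The required inequality should reduce, after clearing the homogeneity, to a one-variable inequality in $t=\gamma$ (equivalently in the ratio $Z/|\zeta|^p$), which is where the algebraic structure of Burkholder's exponent $p-1$ enters; I expect it to come down to showing nonnegativity of an explicit expression involving $(1+\gamma)^{p-2}$ and $(1-(p-1)\gamma)$, verifiable by a derivative test or by recognizing it as a perfect square.

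For the case $p>2$, I would argue by the dual symmetry. The map $(\zeta,\eta,Z,H)\mapsto(\eta,\zeta,H,Z)$ carries $\mathcal{D}$ (for exponent $p$) bijectively onto the corresponding domain for exponent $q$, and since $q=p/(p-1)\in(1,2)$ when $p>2$, the function $\mathcal{B}_q$ satisfies (I) and (II) on that domain by the case already proved. One then checks that both properties are invariant under this coordinate swap: condition (I) is symmetric because $p^*-1=q^*-1$ and $Z^{1/p}H^{1/q}=H^{1/q}Z^{1/p}$ is unchanged, and condition (II) is symmetric because the right-hand side $|\tfrac{\zeta_+-\zeta_-}{2}||\tfrac{\eta_+-\eta_-}{2}|$ is symmetric in the roles of $\zeta$ and $\eta$ while the left-hand side transforms correctly under relabeling. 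Hence $(\zeta,\eta,Z,H)\mapsto\mathcal{B}_q(\eta,\zeta,H,Z)$ works for $p>2$.

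The main obstacle, I expect, is the verification of the differential form of (II) on the non-product region $|\eta|^qZ<|\zeta|^pH$: the implicit differentiation of the coupled system \eqref{intr} produces a somewhat intricate expression for the Hessian, and one must carefully exploit the precise exponents to see that the resulting quadratic form dominates $|d\zeta||d\eta|$ with the sharp constant; getting the bookkeeping of the two-dimensional integration along chords — including the behavior at the boundary of $\mathcal{D}$ and across the interface where only $C^1$-smoothness is available — to be fully rigorous is the delicate part. The positivity on the product region and the reduction for $p>2$ are comparatively routine.
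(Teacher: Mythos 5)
Your approach is genuinely different from the paper's. You propose \emph{direct verification} of (I) and (II) for $\mathcal{B}_p$: a homogeneity reduction to two scalar variables for (I), and an infinitesimal (Hessian) version of (II) integrated along chords, with the $p>2$ case settled by the swap symmetry $(\zeta,\eta,Z,H)\mapsto(\eta,\zeta,H,Z)$. The paper explicitly remarks that such direct verification is possible but ``extremely technical'' and ``does not give an indication on how the special function is constructed,'' and chooses a different route: it identifies $\mathcal{B}_p$ with the abstract Nazarov--Treil Bellman function $\mathbb{B}_p$ of \eqref{defB}, which satisfies (I) and (II) automatically by construction. The upper bound $\mathbb{B}_p\le\mathcal{B}_p$ is obtained from a family of sharp $L^p$ estimates for differentially subordinate martingales (Theorem \ref{auxth}), optimized over the free parameters $\gamma$ and $s$ via Lemmas \ref{auxilem}--\ref{monga}; the lower bound $\mathbb{B}_p\ge\mathcal{B}_p$ comes from explicit two-dimensional Markov martingale constructions. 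Your use of the swap symmetry for $p>2$, and your appeal to Lemma \ref{auxilem} for the range $0\le Y<\gamma<(p-1)^{-1}$ when checking (I), do align with what the paper does.

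That said, as written your proposal has a genuine gap at the place where all the work lies. For (II) on the region $|\eta|^qZ<|\zeta|^pH$, where $\mathcal{B}_p$ is defined only implicitly through the coupled system \eqref{intr}, you do not actually carry out the implicit differentiation, assemble the Hessian, or establish that the resulting quadratic form dominates $|d\zeta||d\eta|$ with the sharp constant; you write only that you ``expect'' the verification to reduce to the nonnegativity of an expression involving $(1+\gamma)^{p-2}$ and $(1-(p-1)\gamma)$, settled by a derivative test or recognized as a perfect square. Similarly, the $C^1$ matching of the two branches across the interface $|\eta|^qZ=|\zeta|^pH$ --- needed both for the chord integration to be legitimate and for chords that cross the interface --- is asserted as expected rather than checked. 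These are precisely the ``extremely technical'' computations the paper is structured to avoid; without carrying them out, the argument is a plausible plan of attack rather than a proof, and one cannot tell from the outline alone whether the Hessian inequality actually holds with the required constant everywhere in the implicit region.
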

It is not difficult to check that when $p=2$, we get the function \eqref{p=2}.  In this case  the system \eqref{intr} can be solved explicitly, and in both cases $|\eta|^2Z\geq |\zeta|^2H$, $|\eta|^2Z<|\zeta|^2H$ we get the expression $\sqrt{(Z-|\zeta|^2)(H-|\eta|^2)}$. For other values of the parameter $p$, no similar compact formula for $\mathcal{B}_p$ seems to exist.

A few words about the proof of the above statement are in order. One can establish the theorem by the direct verification of the conditions (I) and (II), but this approach is extremely technical, and it does not give an  indication on how the special function is constructed. Thus, to simplify and clarify the reasoning, we  decided to propose a different proof. 
There is an abstract formula for a function satisfying the conditions (I) and (II) due to Nazarov and Treil \cite{NT} (see also Nazarov and Volberg \cite{NV} and Dragi\v{c}evi\'c and Volberg \cite{DV}).  We will derive the formula explicitly, actually with the use of a slightly more general, probabilistic setting. This approach has also the advantage that it shows how to handle complicated Bellman functions (depending on many variables) by solving associated less dimensional problems.  For more on this topic, see the second author's monograph \cite{Ose}. 

We have organized the remainder of this paper as follows. In the next section we present the abstract formula of Nazarov and Treil for the function satisfying (I) and (II) and express it in the probabilistic language of martingales. Section 3 contains some auxiliary material: we establish there a family of auxiliary $L^p$ estimates for martingales. The final two sections are devoted to the proof of our main result, Theorem \ref{mainthm}.

\section{An abstract formula}
 Let us start with introducing the necessary notation. Let $\mathfrak{D}$ denote the lattice of dyadic subintervals of $[0,1)$. Given $I\in \mathfrak{D}$, its left and right halves will be denoted by $I_-$ and $I_+$, respectively. Furthermore, for $I\in\mathfrak{D}$ and a locally integrable function $\varphi$ on $[0,1)$, we denote by $\varphi_I$ the average of $\varphi$ over $I$: $\varphi_I=\frac{1}{|I|}\int_I \varphi$. For a fixed $(\zeta,\eta,Z,H)\in\mathcal{D}$, consider all integrable $\varphi,\,\psi$ on $[0,1)$ which satisfy $\varphi_{[0,1)}=\zeta$, $\psi_{[0,1)}=\eta$, $(|\varphi|^p)_{[0,1)}\leq Z$ and $(|\psi|^q)_{[0,1)}\leq H$ (it is not difficult to see that such functions exist). Then, as shown by Nazarov and Treil \cite{NT}, the function
\begin{equation}\label{defB}
 \mathbb{B}_p(\zeta,\eta,Z,H)=\frac{1}{4}\sup\sum_{I\in \mathfrak{D}}|\varphi_{I_+}-\varphi_{I_-}||\psi_{I_-}-\psi_{I_+}||I|
 \end{equation}
satisfies (I) and (II). Here the supremum is taken over all $\varphi$, $\psi$ as above. We will show that the function of Theorem \ref{mainthm} coincides with $\mathbb{B}_p$. Observe that the roles of $\varphi$ and $\psi$ are symmetric, and therefore we immediately see that $\mathbb{B}_p(\zeta,\eta,Z,H)=\mathbb{B}_q(\eta,\zeta,H,Z)$ for all $(\zeta,\eta,Z,H)\in\mathcal{D}$. Consequently, we will be done with Theorem \ref{mainthm} if we manage to establish the equality $\mathbb{B}_p=\mathcal{B}_p$ for $1<p<2$.

Actually, it will be convenient for us to work with an appropriate probabilistic version of \eqref{defB}. Assume that $(\Omega,\F,\mathbb{P})$ is a probability space, equipped with the filtration $(\F_n)_{n\geq 0}$, a nondecreasing sequence of sub-$\sigma$-algebras of $\F$. Let $f,\,g$ be $\mathcal{H}$-valued martingales adapted to $(\F_n)_{n\geq 0}$, and denote by $(df_n)_{n\geq 0}$, $(dg_n)_{n\geq 0}$ the associated difference sequences:
$$ df_0=f_0,\qquad df_n=f_n-f_{n-1},\quad n=1,\,2,\,\ldots,$$
and similarly for $dg$. Following Burkholder \cite{Bu1}, we say that $g$ is \emph{differentially subordinate} to $f$, if for any $n\geq 0$ we have $|dg_n|\leq |df_n|$ almost surely. 

The triple $([0,1),\mathcal{B}([0,1)),|\cdot|)$ forms a probability space and $\mathfrak{D}$ gives rise to the corresponding dyadic filtration (for each $n$, the $\sigma$-algebra $\F_n$ is generated by the Haar functions $\mathfrak{h}_0$, $\mathfrak{h}_1$, $\ldots$, $\mathfrak{h}_n$). The adapted martingales in this special setting are called \emph{dyadic} (or Haar) martingales. 
We easily check that the formula \eqref{defB} can be rewritten as
$$ \mathbb{B}_p(\zeta,\eta,Z,H)=\sup \E \sum_{n=1}^\infty |df_n||dh_n|,$$
where the supremum is taken over the class of all dyadic martingales $f=(f_n)_{n\geq 0}$, $h=(h_n)_{n\geq 0}$ such that $f_0\equiv\zeta$, $\sup_n\E |f_n|^p\leq Z$, $h_0\equiv \eta$ and $\sup_n\E |h_n|^q\leq H$. Let us transform this formula to a more convenient form. First, note that we can write
$$ \mathbb{B}_p(\zeta,\eta,Z,H)=\sup \E \sum_{n=1}^\infty \langle dg_n,dh_n\rangle$$
($\langle\cdot,\cdot\rangle$ is the scalar product in $\mathcal{H}$), where the supremum is taken over all $f$, $h$ as above and all dyadic martingales $g$ which are differentially subordinate to $f$. This can be further simplified. Pick the martingales $f$, $g$, $h$ as above, and note that the first two of them are bounded in $L^p$, while the last one is bounded in $L^q$. Thus, using classical results from the martingale theory, there are random variables $f_\infty$, $g_\infty$ and $h_\infty$ such that $f_n\to f_\infty$, $g_n\to g_\infty$ in $L^p$ and $h_n\to h_\infty$ in $L^q$. Thus, by the orthogonality of the martingale differences, we get that 
\begin{equation}\label{defBB}
\begin{split}
 \mathbb{B}_p(\zeta,\eta,Z,H)&=\sup\E \left\langle \sum_{n=1}^\infty dg_n,\sum_{n=1}^\infty dh_n\right\rangle\\
&=\sup \E \big\langle g_\infty-g_0,h_\infty-h_0\big\rangle\\
&=\sup\big\{ \E \big\langle g_\infty, h_\infty\rangle-\langle \E g_\infty,\E h_\infty\rangle\big\},
\end{split}
\end{equation}
where the supremum is taken over all dyadic martingale triples $(f,g,h)$ such that $f_0\equiv \zeta$, $\E |f_\infty|^p\leq Z$, $h_0\equiv \eta$, $\E |h_\infty|^q\leq H$ and $g$ is differentially subordinate to $f$. This formula immediately shows that $\mathbb{B}_p(\zeta,\eta,Z,H)=\mathcal{B}_p(\zeta,\eta,Z,H)$ if $|\zeta|^p=Z$ or $|\eta|^q=H$; indeed, then the corresponding martingale ($f$ or $h$) must be constant and hence $\mathbb{B}_p(\zeta,\eta,Z,H)=0$. Thus, in our considerations below, we will assume that the strict estimates $|\zeta|^p<Z$ and $|\eta|^q<H$ hold true. Another crucial observation, particularly helpful during the study of lower bounds for $\mathbb{B}_p$, is the fact that in the above formula one can consider all (not necessarily dyadic) martingales. This follows from the results of Maurey \cite{Mau}, see also Section 10 in Burkholder's paper \cite{Bu1}.

The proof of Theorem \ref{mainthm} will rest on a careful analysis of the above formula for $\mathbb{B}_p$. It will consist of several ingredients which are presented in the three sections below.

\section{$L^p$ bounds for differentially subordinate martingales}\label{auxs}

We start with a family of certain auxiliary martingale inequalities. 
For fixed $1<p<2$ and $0<\gamma\leq (p-1)^{-1}$, we introduce the function $b_{p,\gamma}:\mathcal{H}\times \mathcal{H}\to \R$ by
$$ b_{p,\gamma}(x,y)=\begin{cases}
\displaystyle \left(\frac{\gamma}{\gamma+1}\right)^{p-2}(|x|+|y|)^{p-1}\left(|y|-\frac{|x|}{p-1}\right)& \mbox{if }|y|<\gamma |x|,\\
\displaystyle |y|^p-\frac{(2-p)\gamma^{p-1}+\gamma^{p-2}}{p-1}|x|^p & \mbox{if }|y|\geq \gamma |x|.
\end{cases}$$
One can easily verified, given the range of $p$,  that $b_{p,\gamma}$ is of class $C^1$ on $\mathcal{H}\times \mathcal{H}$.  We recall that the martingale $g$ is subordinate to the martingale $f$ if $\mathbb{P}(|dg_n|\leq |df_n|)=1$ for all $n\geq 1$.
We will establish the following statement.

\begin{theorem}\label{auxth}
Suppose that $f$, $g$ are $\mathcal{H}$-valued martingales such that $(f_0,g_0)\equiv (x,y)$ and such that $g$ is subordinate to $f$.  Then for any $p$ and $\gamma$ as above we have
\begin{equation}\label{aux_in}
\E |g_n|^p\leq \frac{(2-p)\gamma^{p-1}+\gamma^{p-2}}{p-1}\E |f_n|^p+b_{p,\gamma}(x,y),\qquad n=0,\,1,\,2,\,\ldots.
\end{equation}
\end{theorem}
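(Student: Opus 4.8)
The plan is to use the Bellman/special-function method, which is the standard tool for establishing sharp martingale inequalities of this type. The key is to show that the candidate function $b_{p,\gamma}$ serves as a supersolution for the problem: concretely, I would introduce the "corrected" function
\[
U_{p,\gamma}(x,y)=b_{p,\gamma}(x,y)-|y|^p+\frac{(2-p)\gamma^{p-1}+\gamma^{p-2}}{p-1}|x|^p
\]
(so that \eqref{aux_in} is equivalent to $\E U_{p,\gamma}(f_n,g_n)\le U_{p,\gamma}(x,y)$, bearing in mind that the two cases in the definition of $b_{p,\gamma}$ mean $U_{p,\gamma}$ equals $b_{p,\gamma}-|y|^p+\cdots$ on $|y|<\gamma|x|$ and is identically $0$ on $|y|\ge\gamma|x|$). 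The proof then reduces to verifying two properties of $U_{p,\gamma}$: a \emph{majorization/sign} property, namely $U_{p,\gamma}\ge 0$ on $\mathcal H\times\mathcal H$ (which gives the pointwise bound $|g_n|^p\le \tfrac{(2-p)\gamma^{p-1}+\gamma^{p-2}}{p-1}|f_n|^p+U_{p,\gamma}(f_n,g_n)$ after unwinding — actually this direction is automatic once one has the supermartingale property, so the majorization needed is that $U\ge$ the "gain," here $0$), and a \emph{concavity/diagonal-concavity} property adapted to differential subordination, namely that for all $x,y,h,k\in\mathcal H$ with $|k|\le|h|$ the function $t\mapsto U_{p,\gamma}(x+th,y+tk)$ is concave, or more precisely that $U_{p,\gamma}(x+h,y+k)\le U_{p,\gamma}(x,y)+\langle \nabla_x U_{p,\gamma}(x,y),h\rangle+\langle\nabla_y U_{p,\gamma}(x,y),k\rangle$.

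\textbf{The steps, in order.} First I would record that $U_{p,\gamma}$ (equivalently $b_{p,\gamma}$) is $C^1$ across the cone boundary $|y|=\gamma|x|$ — this is the claim made just before the theorem statement, and it is essential because the standard argument integrates $\nabla U$ against martingale differences. I would check $C^1$-gluing by matching the two formulas and their first derivatives at $|y|=\gamma|x|$; the homogeneity (degree $p$) of both pieces makes this a finite computation in the two radial variables $|x|,|y|$. Second, I would prove the concavity-type inequality. Since $U_{p,\gamma}$ depends only on $|x|$ and $|y|$, and since the worst case in a differential-subordination estimate is the "one-dimensional reduction" $h,k$ parallel (this uses $|k|\le|h|$ together with the fact that moving $x,y$ with a component orthogonal to themselves only decreases a radially-increasing convex-in-radius quantity — a lemma of the type found in Burkholder's and Wang's work), it suffices to verify concavity of $(s,t)\mapsto U_{p,\gamma}(x_0+sh_0,y_0+tk_0)$ along admissible directions, which ultimately reduces to a differential inequality: on each region, $\langle \nabla^2 U_{p,\gamma}(x,y)(h,k),(h,k)\rangle\le 0$ whenever $|k|\le|h|$. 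On $|y|\ge\gamma|x|$ this is trivial since $U\equiv 0$. On the cone $|y|<\gamma|x|$ one computes the Hessian of $\big(\tfrac{\gamma}{\gamma+1}\big)^{p-2}(|x|+|y|)^{p-1}\big(|y|-\tfrac{|x|}{p-1}\big)-|y|^p+\tfrac{(2-p)\gamma^{p-1}+\gamma^{p-2}}{p-1}|x|^p$ and checks the sign; the choice of the constant $\big(\tfrac{\gamma}{\gamma+1}\big)^{p-2}$ and of the coefficient of $|x|^p$ is precisely what makes this work and what makes the gluing $C^1$. Third, with these two properties in hand, the inequality follows by the routine Bellman chain: $\E U_{p,\gamma}(f_{n},g_{n})\le \E\big[U_{p,\gamma}(f_{n-1},g_{n-1})+\langle\nabla_x U,\,df_n\rangle+\langle\nabla_y U,\,dg_n\rangle\big]=\E U_{p,\gamma}(f_{n-1},g_{n-1})$ by the concavity inequality and then the tower property / martingale property of $f,g$ (the linear terms vanish in conditional expectation); iterating down to $n=0$ gives $\E U_{p,\gamma}(f_n,g_n)\le U_{p,\gamma}(x,y)$, which is \eqref{aux_in}.

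\textbf{The main obstacle} is the concavity verification on the cone $|y|<\gamma|x|$. Writing $r=|x|$, $s=|y|$, the function is $\varphi(r,s)=c_1(r+s)^{p-1}(s-r/(p-1))-s^p+c_2 r^p$ with $c_1=(\gamma/(\gamma+1))^{p-2}$, $c_2=\tfrac{(2-p)\gamma^{p-1}+\gamma^{p-2}}{p-1}$, and I must show the relevant quadratic form in $(dr,ds)$ (with $|ds|\le|dr|$, and including the curvature terms $\nabla_x U\cdot(\text{orthogonal displacement})/r$ coming from the $\mathcal H$-valued, not scalar, setting) is nonpositive throughout the cone $0\le s<\gamma r$. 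This is a genuinely delicate two-variable inequality; one convenient reduction is to exploit homogeneity and set $r=1$, reducing to a one-variable inequality in $s\in[0,\gamma)$, and then to check the endpoint behavior ($s\to 0$ and $s\to\gamma$) together with monotonicity or convexity of the resulting single-variable expression. I expect the constraint $1<p<2$ and the upper bound $\gamma\le(p-1)^{-1}$ to enter exactly here, ensuring the needed sign. Everything else — the $C^1$ gluing, the orthogonal-displacement reduction, and the Bellman iteration — is standard and should go through with only bookkeeping.
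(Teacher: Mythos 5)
Your overall strategy --- a $C^1$ Bellman function with a majorization and a diagonal concavity property, followed by Burkholder's iteration --- is the paper's. But the specific function you propose to run it with is wrong, and this is a real gap, not a bookkeeping issue. You define
\[
U_{p,\gamma}(x,y)=b_{p,\gamma}(x,y)-|y|^p+\frac{(2-p)\gamma^{p-1}+\gamma^{p-2}}{p-1}\,|x|^p
\]
and claim simultaneously that $U_{p,\gamma}\ge 0$ everywhere and that $t\mapsto U_{p,\gamma}(x+th,y+tk)$ is concave whenever $|k|\le|h|$. For this $U_{p,\gamma}$ those two properties are incompatible. Since $U_{p,\gamma}\equiv 0$ on the open cone $\{|y|>\gamma|x|\}$ and $U_{p,\gamma}$ is $C^1$, at any boundary point $(x,y)$ with $|y|=\gamma|x|$ and $x\ne 0$ both $U_{p,\gamma}$ and $\nabla U_{p,\gamma}$ vanish. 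Taking the admissible direction $h=x$, $k=0$, diagonal concavity would force
\[
U_{p,\gamma}\big((1+t)x,y\big)\le U_{p,\gamma}(x,y)+t\,\big\langle\nabla U_{p,\gamma}(x,y),(x,0)\big\rangle=0\qquad\text{for all }t>0.
\]
But this ray enters $\{|y|<\gamma|x|\}$, where the $|x|^p$-coefficient of $U_{p,\gamma}$ is
\[
\frac{\gamma^{p-2}}{p-1}\Big[1+(2-p)\gamma-(1+\gamma)^{2-p}\Big]>0
\]
by strict concavity of $s\mapsto s^{2-p}$ (here $0<2-p<1$), so $U_{p,\gamma}\big((1+t)x,y\big)\to+\infty$ as $t\to\infty$. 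Hence $U_{p,\gamma}$ is not diagonally concave and the Bellman chain you write cannot be run. (The asserted equivalence of \eqref{aux_in} with $\E\,U_{p,\gamma}(f_n,g_n)\le U_{p,\gamma}(x,y)$ is also not an equivalence; the extra $|y|^p$ and $|x|^p$ terms do not cancel in the needed way.)

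The paper keeps the two roles you conflate separate, and that is the fix. The diagonal-concavity lemma (Lemma~\ref{propb}(iii)) is proved for $b_{p,\gamma}$ itself, not $U_{p,\gamma}$; in particular the check on $\{|y|>\gamma|x|\}$, where $b_{p,\gamma}=|y|^p-\frac{(2-p)\gamma^{p-1}+\gamma^{p-2}}{p-1}|x|^p$, is not trivial --- it is exactly the computation \eqref{secder}, and it is precisely where $1<p<2$, $|k|\le|h|$ and the size of the coefficient of $|x|^p$ combine to dominate the convexity of $|y|^p$. The majorization $b_{p,\gamma}(x,y)\ge |y|^p-\frac{(2-p)\gamma^{p-1}+\gamma^{p-2}}{p-1}|x|^p$ (i.e.\ $U_{p,\gamma}\ge 0$, Lemma~\ref{propb}(ii)) is established separately. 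The iteration then gives $\E\,b_{p,\gamma}(f_n,g_n)\le b_{p,\gamma}(x,y)$, and applying the majorization inside the expectation on the left yields \eqref{aux_in}. Your remarks about $C^1$ gluing, homogeneity, and the integrability justification for taking expectations are fine and match Lemma~\ref{propb}(i) and the proof of the theorem.
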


To prove this theorem, we will require the following properties of $b_{p,\gamma}$.

\begin{lemma}\label{propb}
(i) There is an absolute constant $c_{p,\gamma}$, depending only on the parameters indicated, such that
$$ |b_{p,\gamma}(x,y)|\leq c_{p,\gamma}(|x|^p+|y|^p)$$
and
$$
 \left|\frac{\partial b_{p,\gamma}(x,y)}{\partial x}\right|+\left|\frac{\partial b_{p,\gamma}(x,y)}{\partial y}\right|\leq c_{p,\gamma}(|x|^{p-1}+|y|^{p-1}).
$$

(ii) For any $x$, $y\in \mathcal{H}$ we have the majorization
\begin{equation}\label{maj}
b_{p,\gamma}(x,y)\geq |y|^p-\frac{(2-p)\gamma^{p-1}+\gamma^{p-2}}{p-1}|x|^p.
\end{equation}

(iii) For any $x,\,y,\,h,\,k\in \mathcal{H}$ such that $|k|\leq |h|$, the function
$$
F_{x,y,h,k}(t)= b_{p,\gamma}(x+th,y+tk),\qquad t\in \R,
$$
is concave.
\end{lemma}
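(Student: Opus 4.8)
The plan is to verify the three assertions by splitting the Hilbert space $\mathcal{H}\times\mathcal{H}$ into the two regions $\{|y|<\gamma|x|\}$ and $\{|y|\geq\gamma|x|\}$ on which $b_{p,\gamma}$ is given by explicit formulas, and then to exploit the fact—already noted in the text—that $b_{p,\gamma}$ is of class $C^1$, so that no delta-type singularity appears across the interface $|y|=\gamma|x|$. For part (i), on each of the two regions the function $b_{p,\gamma}$ and its gradient are built from the scalars $|x|,|y|$ via homogeneous expressions of degrees $p$ and $p-1$ respectively; since both regions are cones, homogeneity plus continuity on the compact set $\{|x|^p+|y|^p=1\}$ immediately yields the claimed bounds with $c_{p,\gamma}$ equal to the maximum of $|b_{p,\gamma}|$ (resp. $|\nabla b_{p,\gamma}|$) over that sphere. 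One should just note that $\partial b_{p,\gamma}/\partial x$ means the Fréchet derivative in the $x$-variable, which points in the direction $x/|x|$ (and is $0$ at $x=0$), so its norm is a function of $|x|,|y|$ alone. Part (ii) is essentially immediate: on the region $\{|y|\geq\gamma|x|\}$ equality holds in \eqref{maj} by definition, and on $\{|y|<\gamma|x|\}$ one must check the pointwise inequality
$$\left(\frac{\gamma}{\gamma+1}\right)^{p-2}(|x|+|y|)^{p-1}\left(|y|-\frac{|x|}{p-1}\right)\geq |y|^p-\frac{(2-p)\gamma^{p-1}+\gamma^{p-2}}{p-1}|x|^p;$$
by homogeneity this reduces to a one-variable inequality in $s=|y|/|x|\in[0,\gamma)$ which, since both sides and their first derivatives agree at $s=\gamma$ (this is exactly the $C^1$-matching), follows from a convexity/monotonicity comparison of the two sides on $[0,\gamma]$.

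The substantive part is (iii), the concavity of $t\mapsto F_{x,y,h,k}(t)=b_{p,\gamma}(x+th,y+tk)$ when $|k|\leq|h|$. Since $b_{p,\gamma}\in C^1$ but is only piecewise smooth, $F$ is $C^1$ with a piecewise-$C^2$ derivative, so it suffices to show $F''\leq 0$ on each of the (at most three, by convexity of the cones intersected with a line) open subintervals where $(x+th,y+tk)$ stays in one region, and then invoke the $C^1$-gluing to conclude global concavity. On the region $\{|y|\geq\gamma|x|\}$ one has $F(t)=|y+tk|^p-c|x+th|^p$ with $c=\frac{(2-p)\gamma^{p-1}+\gamma^{p-2}}{p-1}$; here $t\mapsto|y+tk|^p$ is convex but $t\mapsto -c|x+th|^p$ is concave, and the needed inequality $\frac{d^2}{dt^2}|y+tk|^p\leq c\,\frac{d^2}{dt^2}|x+th|^p$ is the classical computation behind Burkholder's function—using $|k|\leq|h|$ together with the constraint $|y|\geq\gamma|x|$ (which propagates along the line only on this region) and the fact that $p<2$ makes $r\mapsto r^p$ less convex for large $r$. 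On the region $\{|y|<\gamma|x|\}$ one writes $F(t)=C(u+v)^{p-1}\big(v-\tfrac{u}{p-1}\big)$ with $u=|x+th|$, $v=|y+tk|$, $C=(\gamma/(\gamma+1))^{p-2}$, and computes $F''$ using $u'=\langle x+th,h\rangle/u$, $v'=\langle y+tk,k\rangle/v$, and the standard identities $u''=(|h|^2-(u')^2)/u\geq0$ (Cauchy--Schwarz), $(u')^2\le|h|^2$, likewise for $v$; after collecting terms one obtains an expression that is manifestly $\leq0$ once $|k|\leq|h|$ is used to bound $(v')^2\le|k|^2\le|h|^2$ and $v''\le(|h|^2-(v')^2)/v$.

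I expect the computation of $F''$ in the region $\{|y|<\gamma|x|\}$ to be the main obstacle: the product $(u+v)^{p-1}(v-u/(p-1))$ produces several competing terms after two differentiations, and one must organize them carefully—grouping the "$u''$ and $v''$" contributions separately from the "$(u')^2,(v')^2,u'v'$" contributions—and then use the sign constraints ($u''\ge0$, $v''\le(|h|^2-(v')^2)/v$, $|v'|\le|h|$, $0\le v<\gamma u$) to certify negativity. A convenient reduction is to first treat the \emph{extremal} case $u''=0$, $v''=(|h|^2-(v')^2)/v$ and $|k|=|h|$ (which, by the structure of the inequality, is where concavity is tightest) and observe that $F''$ there becomes a function of $u,v,u',v'$ alone that can be checked by a short algebraic manipulation, reducing essentially to the ODE inequality that the profile $s\mapsto C(1+s)^{p-1}(s-1/(p-1))$ satisfies on $[0,\gamma]$; the general case then follows because the omitted terms carry a favorable sign. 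Finally, one records that the two one-sided second derivatives at an interface point $|y+tk|=\gamma|x+th|$ need not match, but the $C^1$ property of $b_{p,\gamma}$ guarantees $F$ has no concave-breaking convex kink there, so piecewise concavity upgrades to concavity on all of $\R$.
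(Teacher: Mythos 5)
Your treatment of parts (i) and (ii) matches the paper's: (i) is by homogeneity of degree $p$ (resp.\ $p-1$) plus continuity on the unit sphere, and (ii) is verified only on $\{|y|<\gamma|x|\}$ after reducing by homogeneity to one real variable; the paper makes your ``convexity/monotonicity comparison'' precise by setting $G = \text{LHS}-\text{RHS}$ on $[(\gamma+1)^{-1},1]$ (with $x+y=1$), checking $G=G'=0$ at the left endpoint, and showing $G''\geq 0$, and your remark about the $C^1$-matching is exactly why those two boundary values vanish. Your overall structure for (iii) — piecewise concavity plus the $C^1$-gluing observation — is also correct.

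Where you diverge from the paper, and where your sketch is not complete, is in (iii) on the region $\{|y|<\gamma|x|\}$. The function $(|x|+|y|)^{p-1}\bigl(|y|-|x|/(p-1)\bigr)$ is, up to the constant $(\gamma/(\gamma+1))^{p-2}$, exactly Burkholder's classical special function for the martingale transform inequality, and the concavity of $t\mapsto\bigl(|x+th|+|y+tk|\bigr)^{p-1}\bigl(|y+tk|-|x+th|/(p-1)\bigr)$ whenever $|k|\leq|h|$ is a known result that the paper simply cites (page 17 of Burkholder's Saint-Flour notes \cite{Bu3}) and that holds \emph{globally}, for all $(x,y,h,k)$ with $|k|\leq|h|$ — the cone constraint $|y|<\gamma|x|$ plays no role there and would be a red herring in the computation you propose. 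You anticipate this being ``the main obstacle'' and leave it as an expectation that collected terms will be ``manifestly $\leq 0$''; as written that is a gap. Conversely, you label the comparison on $\{|y|\geq\gamma|x|\}$ as ``the classical computation behind Burkholder's function,'' but that is the region where the paper actually has to do a new (short) calculation: it evaluates $\frac{d^2}{dt^2}\bigl[|y+tk|^p - c|x+th|^p\bigr]\big|_{t=0}$ and bounds it using $p<2$, Cauchy--Schwarz $\langle x,h\rangle^2\leq|x|^2|h|^2$, the region constraint $|y|>\gamma|x|$ (to get $|y|^{p-2}\leq\gamma^{p-2}|x|^{p-2}$), and $|k|\leq|h|$, arriving at $\leq p(p-2)\gamma^{p-1}|x|^{p-2}|k|^2\leq 0$. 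So the split of labor between the two regions is inverted in your sketch: cite Burkholder for the $(|x|+|y|)^{p-1}$ piece, and supply the short direct second-derivative bound for the $|y|^p-c|x|^p$ piece.
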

\begin{proof}
(i) This is straightforward: we leave the details to the reader.

(ii) Clearly we may assume that $\mathcal{H}=\R$ and $x,\,y\geq 0$. Furthermore, it suffices to show the majorization for $y<\gamma x$. Finally, by homogeneity, we may assume that $x+y=1$. Then the bound can be rewritten as
$$ \left(\frac{\gamma}{\gamma+1}\right)^{p-2}\left(1-\frac{px}{p-1}\right)- (1-x)^p+\frac{(2-p)\gamma^{p-1}+\gamma^{p-2}}{p-1}x^p\geq 0$$
for $x\geq (\gamma+1)^{-1}$. Denoting the left-hand side by $G(x)$, we compute that $G((\gamma+1)^{-1})=G'((\gamma+1)^{-1})=0$ and that (using $x+y=1$ and $y<\gamma x$) 
\begin{align*}
 G''(x)&=p(p-1)x^{p-2}\left[\frac{(2-p)\gamma^{p-1}+\gamma^{p-2}}{p-1}-\left(\frac{1-x}{x}\right)^{p-2}\right]\\
&\geq p(p-1)x^{p-2}\left[\frac{(2-p)\gamma^{p-1}+\gamma^{p-2}}{p-1}-\gamma^{p-2}\right]\\
&=p(2-p)(\gamma x)^{p-2}(1+\gamma)\geq 0.
\end{align*}
Thus, \eqref{maj} follows.

(iii) This property, as shown by Burkholder, is crucial in proving inequalities for differentially subordinate martingales. The function $F_{x,y,h,k}$ is of class $C^1$, so we will be done if we check that $F_{x,y,h,k}''(t)\leq 0$ for $t$ such that $0<|y+tk|<\gamma|x+th|$ or $0<|x+th|<|y+tk|/\gamma$. In the first case, we go back to Burkholder's calculation  (cf. page 17 in \cite{Bu3}): actually, the function
$$ t \mapsto (|x+th|+|y+tk|)^{p-1}\left(|y+tk|-\frac{|x+th|}{p-1}\right)$$
is concave on $\R$ for \emph{any} $x$, $y$, $h$, $k$ with $|k|\leq |h|$. To handle $F''_{x,y,h,k}(t)$ for $0<|x+th|<|y+tk|/\gamma$, note that we have the translation property 
$F_{x,y,h,k}(t+s)=F_{x+th,y+tk,h,k}(s)$ for all $t,\,s\in \R$, and hence it is enough to study the sign of the second derivative at $t=0$. We compute that
\begin{equation}\label{secder}
\begin{split}
 &\frac{\mbox{d}^2}{\mbox{d}t^2}\left[|y+tk|^p-\frac{(2-p)\gamma^{p-1}+\gamma^{p-2}}{p-1}|x+th|^p\right]\Bigg|_{t=0}\\
 &=p|y|^{p-2}|k|^2+p(p-2)|y|^{p-4}\langle y,k\rangle^2\\
&\qquad \qquad  -\frac{(2-p)\gamma^{p-1} +\gamma^{p-2}}{p-1}\left(p(p-2)|x|^{p-4}\langle x,h\rangle^2+p|x|^{p-2}|k|^2\right).
\end{split}
\end{equation}
Now, since $p$ is smaller than $2$, we immediately see that  $p|y|^{p-2}|k|^2\leq p(\gamma |x|)^{p-2}|k|^2$, $p(p-2)|y|^{p-4}\langle y,k\rangle^2\leq 0$ and
$$ p(p-2)|x|^{p-4}\langle x,h\rangle^2+p|x|^{p-2}|k|^2\geq p(p-1)|x|^{p-2}|h|^2.$$
Hence the second derivative \eqref{secder} is not larger than $ p(p-2)\gamma^{p-1}|x|^{p-2}|k|^2\leq 0$, and the claim follows.
\end{proof}

With this lemma, we now turn our attention to the main result of this section.

\begin{proof}[Proof of Theorem \ref{auxth}]
There is a well-known procedure established by Burkholder which enables the extraction of \eqref{aux_in} from the special function $b_{p,\gamma}$. Fix $f$, $g$, $n$ as in the statement. Of course we may and do assume that $\E |f_n|^p<\infty$, since otherwise the bound is trivial. Then $\E |f_k|^p<\infty$ for all $0\leq k\leq n$, and hence also $df_k,\,dg_k$ are $p$-integrable for these values of $k$. 
The key observation is that by Lemma \ref{propb} (iii) and the smoothness of $b_{p,\gamma}$, we have
\begin{align*}
 b_{p,\gamma}(f_{k+1},g_{k+1})
&= b_{p,\gamma}(f_k+df_{k+1},g_k+dg_{k+1})\\
 &\leq b_{p,\gamma}(f_k,g_k)+ \left\langle \frac{\partial b_{p,\gamma}(f_k,g_k)}{\partial x},df_{k+1}\right\rangle+ \left\langle \frac{\partial b_{p,\gamma}(f_k,g_k)}{\partial y},dg_{k+1}\right\rangle,
\end{align*}
for $k=0,\,1,\,2,\,\ldots,\,n-1$. Now by Lemma \ref{propb} (i) and the aforementioned $p$-integrability of the differences of $f$ and $g$, we see that both sides above are integrable. Taking expectation yields $\E b_{p,\gamma}(f_{k+1},g_{k+1})\leq \E b_{p,\gamma}(f_{k},g_{k})$ and hence, by \eqref{maj},
\begin{align*}
 \E \left[|g_n|^p-\frac{(2-p)\gamma^{p-1}+\gamma^{p-2}}{p-1}|f_n|^p\right]&\leq \E b_{p,\gamma}(f_n,g_n)\\
 &\leq \E b_{p,\gamma}(f_0,g_0)=b_{p,\gamma}(x,y).
 \end{align*}
This is precisely the assertion of the theorem.
\end{proof}

Let us conclude this section by making a simple observation which will be needed later. Namely, if the martingale $f$ in Theorem \ref{auxth} is assumed to be $L^p$ bounded, then so is $g$ (by Burkholder's inequality for differentially subordinate martingales) and we may let $n\to \infty$ in \eqref{aux_in} to obtain
\begin{equation}\label{aux_in2}
 \E |g_\infty|^p- \frac{(2-p)\gamma^{p-1}+\gamma^{p-2}}{p-1}\E |f_\infty|^p\leq b_{p,\gamma}(x,y).
\end{equation}

\section{Proof of $\mathbb{B}_p\leq \mathcal{B}_p$}

Our goal is now to deduce the above upper bound for $\mathbb{B}_p$ from Theorem \ref{auxth}. We start with three technical facts.

\begin{lemma}\label{auxilem}
Let $1<p<2$ and fix $(\zeta,\eta,Z,H)\in \mathcal{D}$ such that $Z>|\eta|^p$, $H>|\eta|^q$ and $|\eta|^qZ<|\zeta|^pH$. Then there is a unique pair $(\gamma,Y)$ satisfying the system \eqref{intr}.
\end{lemma}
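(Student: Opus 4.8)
My plan is to reduce the two-equation system \eqref{intr} to a single-variable monotonicity statement. Write $t = (|\eta|^q Z/(|\zeta|^p H))^{1/q} \in (0,1)$ for the right-hand side of the second equation (the hypothesis $|\eta|^q Z < |\zeta|^p H$ forces $t<1$, and positivity of all quantities forces $t>0$), and write $s = Z/|\zeta|^p > 1$ for the right-hand side of the first. The plan is to parametrize by $\gamma$ on the interval $(0,(p-1)^{-1})$: for each such $\gamma$, the second equation $Y(1+Y)^{p-2} = t\,\gamma(1+\gamma)^{p-2}$ should determine $Y = Y(\gamma)$ uniquely in $[0,\gamma)$. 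To see this I would study $\phi(u) = u(1+u)^{p-2}$ on $[0,(p-1)^{-1}]$: since $\phi'(u) = (1+u)^{p-3}(1+(p-1)u)$ and $1<p<2$, we have $\phi' > 0$ on this interval, so $\phi$ is a strictly increasing bijection from $[0,\gamma]$ onto $[0,\phi(\gamma)]$. As $t\in(0,1)$ and $\phi(\gamma)>0$, the value $t\phi(\gamma)$ lies in $(0,\phi(\gamma))$, hence there is a unique $Y(\gamma)\in(0,\gamma)$ solving the second equation, and $\gamma\mapsto Y(\gamma)$ is continuous (indeed smooth, by the implicit function theorem, since $\phi'>0$).

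The second and main step is to substitute $Y = Y(\gamma)$ into the left-hand side of the first equation and show that the resulting function
$$
\Psi(\gamma) = \frac{(1-(p-1)Y(\gamma))(1+Y(\gamma))^{p-1}}{(1-(p-1)\gamma)(1+\gamma)^{p-1}}
$$
is a strictly monotone (I expect decreasing) continuous bijection from $(0,(p-1)^{-1})$ onto $(1,\infty)$, or at least onto an interval containing $s = Z/|\zeta|^p$. The boundary behavior should be read off directly: as $\gamma\to(p-1)^{-1}-$ the denominator factor $1-(p-1)\gamma\to 0^+$ while $Y(\gamma)$ stays bounded away from $(p-1)^{-1}$ (this needs a small argument using $t<1$ strictly — it is exactly here that the hypothesis $|\eta|^q Z<|\zeta|^p H$ is used), so $\Psi(\gamma)\to\infty$; as $\gamma\to 0^+$ we have $Y(\gamma)\to 0$ as well, and a first-order expansion of both numerator and denominator should give $\Psi(\gamma)\to 1$. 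Combined with monotonicity, the intermediate value theorem then yields a unique $\gamma$ with $\Psi(\gamma) = s$, and setting $Y = Y(\gamma)$ completes the existence-and-uniqueness claim; the inequalities $0\le Y<\gamma<(p-1)^{-1}$ come for free from the construction.

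The hard part will be establishing the strict monotonicity of $\Psi$. Differentiating directly is messy because $Y(\gamma)$ is only implicitly defined; the cleaner route is to use $\phi(Y) = t\phi(\gamma)$ to express $Y'(\gamma) = \phi'(\gamma)/\phi'(Y)\cdot t = \phi'(\gamma)Y/(\phi'(Y)\gamma)$ after dividing the relation $\phi(Y)=t\phi(\gamma)$ appropriately, and then to show $\log\Psi(\gamma)$ has a definite-sign derivative. Writing $\log\Psi = \log(1-(p-1)Y) + (p-1)\log(1+Y) - \log(1-(p-1)\gamma) - (p-1)\log(1+\gamma)$ and differentiating, one gets an expression of the form $A(Y)Y'(\gamma) - A(\gamma)$ where $A(u) = -\tfrac{p-1}{1-(p-1)u} + \tfrac{p-1}{1+u}$; substituting the formula for $Y'(\gamma)$ and clearing denominators should reduce the sign question to an elementary but somewhat delicate inequality in the two variables $0<Y<\gamma<(p-1)^{-1}$, which I would verify by checking it holds with equality on the diagonal $Y=\gamma$ (corresponding to $t=1$) and has the right sign of derivative as $Y$ decreases. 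If a fully elementary argument proves elusive, an acceptable fallback is to note that $\Psi$ arises as a ratio built from the single auxiliary function $\phi$ and invoke a log-convexity property of $\phi$ on $(0,(p-1)^{-1})$ to force strict monotonicity of the composite; either way the computation is routine once the right substitution is in place, so I will not grind through it here.
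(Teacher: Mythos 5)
Your reduction of the system \eqref{intr} to a single-variable intermediate-value argument is essentially the paper's proof with the roles of the two variables swapped: you parametrize by $\gamma$ and solve the second equation for $Y(\gamma)$, while the paper parametrizes by $Y$ and introduces $G(Y)=\gamma$ as the larger root. The boundary analysis you sketch ($\Psi\to 1$ as $\gamma\to 0^+$ and $\Psi\to\infty$ as $\gamma\to (p-1)^{-1}$, with $Y(\gamma)$ staying bounded away from $(p-1)^{-1}$ because $t<1$ strictly) is correct, and your parametrization in fact handles $\eta=0$ uniformly (there $t=0$, $Y\equiv 0$, and $\Psi=1/\kappa$ is still a strictly increasing bijection onto $(1,\infty)$), so the only small slip is your assertion that positivity forces $t>0$.

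The genuine gap is that you explicitly decline to prove the strict monotonicity of $\Psi$, which is the heart of the lemma, and the two routes you gesture at — grinding out $(\log\Psi)'$ as a two-variable inequality, or invoking an unspecified ``log-convexity'' of $\phi$ (note $\phi$ is in fact log-concave on $(0,(p-1)^{-1})$, so that fallback as stated does not help) — are left as speculation. The step admits a clean proof, but it turns on two structural facts that you have not isolated: writing $\kappa(u)=(1-(p-1)u)(1+u)^{p-1}$, one has the identity $\kappa'(u)=-p(p-1)\phi(u)$, and $\phi$ is strictly concave on the relevant interval (so $\phi'$ is strictly decreasing). From $\phi(Y)=t\,\phi(\gamma)$ these yield $\kappa'(Y)=t\,\kappa'(\gamma)$ and $Y'(\gamma)=t\,\phi'(\gamma)/\phi'(Y)<t<1$, whence
\[
\Psi'(\gamma)=\frac{\kappa'(Y)Y'(\gamma)\,\kappa(\gamma)-\kappa(Y)\kappa'(\gamma)}{\kappa(\gamma)^2}
=\frac{\kappa'(\gamma)\bigl[t\,Y'(\gamma)\,\kappa(\gamma)-\kappa(Y)\bigr]}{\kappa(\gamma)^2}>0,
\]
because $\kappa'(\gamma)<0$, $t\,Y'(\gamma)<1$, and $\kappa(\gamma)<\kappa(Y)$ (as $\kappa$ is strictly decreasing and $Y<\gamma$). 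Without this computation, or the paper's equivalent one via $G'(Y)>1$ and the same identity for $\kappa'$, the proposal is not a proof.
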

\begin{proof}
For clarity purposes, we split the proof into several steps. 

\smallskip

\noindent \emph{Step 1. Auxiliary functions.} 
Consider $\kappa,\,\delta:[0,\infty)\to [0,\infty)$ given by $\kappa(t)=(1-(p-1)t)(1+t)^{p-1}$ and $\delta(t)=t(1+t)^{p-2}$. A direct differentiation shows that
$$ \kappa'(t)=-p(p-1)t(1+t)^{p-2}<0,\quad \delta'(t)=(1+t)^{p-3}(1+(p-1)t)>0$$
and
$$ \delta''(t)=(p-2)(1+t)^{p-4}(2+(p-1)t)<0.$$

\noindent \emph{Step 2. An easy case.} If $|\eta|=0$, the assertion of the lemma is clear as  the second equality in \eqref{intr} implies $Y=0$, and plugging this into the first equation gives
$ \kappa(\gamma)=|\zeta|^p/Z\in (0,1)$. But, as we have observed above, $\kappa$ is strictly decreasing and satisfies $\kappa(0)=1$, $\kappa((p-1)^{-1})=0$; thus the claim follows at once from the intermediate value property. Hence, from now on, we may assume that $\eta\neq 0$. 

\smallskip

\noindent \emph{Step 3. An extra function.} As we have shown above, $\delta$ is strictly increasing so for a given $Y> 0$ there is a unique $G(Y)>Y$ satisfying
$$ \delta(Y)=\left(\frac{|\eta|^qZ}{|\zeta|^pH}\right)^{1/q}\delta\big(G(Y)\big).$$
Of course, $G$ is a smooth function on $(0,\infty)$. Differentiating both sides above gives
$$ G'(Y)=\frac{\delta'(Y)}{\delta'(G(Y))}\left(\frac{|\zeta|^pH}{|\eta|^qZ}\right)^{1/q},$$
and hence $G'(Y)>1$. Indeed, $|\zeta|^pH/(|\eta|^qZ)>1$ by the assumption of the lemma, and $\delta'(Y)/\delta'(G(Y))>1$, because $G(Y)>Y$ and $\delta''<0$.

\smallskip

\noindent \emph{Step 4. Completion of the proof.} The assertion of the lemma will follow if we show that there is a unique $Y>0$ for which $G(Y)<(p-1)^{-1}$ and
$$ F(Y):=\kappa(Y)-\frac{Z}{|\zeta|^p}\kappa(G(Y))=0.$$
However, we have
$$ F'(Y)=\kappa'(Y)-\frac{Z}{|\zeta|^p}\kappa'(G(Y))G'(Y)>\kappa'(Y)-\frac{Z}{|\zeta|^p}\kappa'(G(Y)),$$
since $G'(Y)>1$ and $\kappa'(G(Y))<0$. Thus,
$$ F'(Y)=-p(p-1)Y(1+Y)^{p-2}\left[1-\frac{Z}{|\zeta|^p}\left(\frac{|\zeta|^pH}{|\eta|^qZ}\right)^{1/q}\right]>0$$
and it remains to note that $\lim_{Y\to 0}F(Y)=0$ (since $\gamma(Y)\to 0$ as $Y\to 0$), and $F(Y)$ is positive when $G(Y)$ approaches $(p-1)^{-1}$.
\end{proof}

\begin{lemma}\label{aux_lemma}
Fix nonzero $\zeta,\,\eta\in \mathcal{H}$ and two numbers $Z$, $H$ satisfying $Z> |\zeta|^p$ and $H> |\eta|^q$. 
Consider the function
\begin{align*}
 L(\gamma,Y)=-Y|\eta|+H^{1/q}&\bigg(\left(\frac{\gamma}{\gamma+1}\right)^{p-2}(1+Y)^{p-1}\left(Y-\frac{1}{p-1}\right)\\
&\qquad \qquad \qquad \qquad \qquad +\frac{(2-p)\gamma^{p-1}+\gamma^{p-2}}{p-1}\frac{Z}{|\zeta|^p}\bigg)^{1/p},
\end{align*}
defined for $0\leq Y\leq \gamma\leq (p-1)^{-1}$, and assume that $L$ attains its minimum at the point $(\gamma_0,Y_0)$.

(i) If $|\eta|^qZ\geq |\zeta|^pH$, then $\gamma_0=Y_0=(p-1)^{-1}$.

(ii) If $|\eta|^qZ< |\zeta|^pH$, then $(\gamma_0,Y_0)$ is the unique solution to the system \eqref{intr}.
\end{lemma}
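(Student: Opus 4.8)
The plan is to treat this as a constrained optimization problem over the triangle $\{0\le Y\le \gamma\le (p-1)^{-1}\}$ and locate the minimizer by inspecting the boundary and the interior critical points. First I would verify that $L$ is well defined, i.e.\ that the quantity under the $p$-th root is nonnegative on the whole triangle; the cleanest way is to recognize this expression as essentially $H^{-1}\E|g_\infty|^p$ (up to the factor $Z/|\zeta|^p$) in the extremal configuration of Theorem \ref{auxth}, or, more directly, to note that it equals $(\gamma/(\gamma+1))^{p-2}$ times $b_{p,\gamma}$ evaluated at the boundary point $|x|=1$, $|y|=Y$ of the region $\{|y|<\gamma|x|\}$, rescaled by $Z/|\zeta|^p\ge 1$; positivity then follows from the majorization \eqref{maj} together with the elementary inequality $(2-p)\gamma^{p-1}+\gamma^{p-2}\ge p\gamma^{p-1}$ (equivalently $\gamma\le(p-1)^{-1}$). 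Then I would compute the two partial derivatives $\partial L/\partial \gamma$ and $\partial L/\partial Y$.

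For the $Y$-derivative, differentiating the second term produces a factor proportional to $\frac{d}{dY}\big[(1+Y)^{p-1}(Y-\tfrac1{p-1})\big]=p(1+Y)^{p-2}\,Y$ (this is exactly the computation behind Lemma \ref{propb}(iii) and Burkholder's concavity), so $\partial L/\partial Y=0$ rearranges, after clearing the common $p$-th-root factor, into precisely the second equation of \eqref{intr}. For the $\gamma$-derivative, only the second term of $L$ depends on $\gamma$, and $\frac{d}{d\gamma}(\gamma/(\gamma+1))^{p-2}$ and $\frac{d}{d\gamma}\big((2-p)\gamma^{p-1}+\gamma^{p-2}\big)$ combine; I expect the algebra to collapse (using $\kappa(\gamma)=(1-(p-1)\gamma)(1+\gamma)^{p-1}$ from Lemma \ref{auxilem}, Step 1) so that $\partial L/\partial\gamma=0$ becomes the first equation of \eqref{intr}. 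Thus any interior critical point of $L$ is a solution of the system \eqref{intr}, which by Lemma \ref{auxilem} is unique when $|\eta|^qZ<|\zeta|^pH$; this will give part (ii) once I rule out that the minimum sits on the boundary.

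For part (i), when $|\eta|^qZ\ge|\zeta|^pH$ I would argue that $L$ has no interior critical point (the sign of $\partial L/\partial Y$ is determined by the comparison of $\delta(Y)=Y(1+Y)^{p-2}$ with $(|\eta|^qZ/(|\zeta|^pH))^{1/q}\delta(\gamma)$, and under this inequality the stationarity condition forces $Y\ge\gamma$, pushing us to the diagonal), hence the minimum lies on $\partial$. On the edge $\gamma=(p-1)^{-1}$ one checks $\partial L/\partial Y$ does not vanish for $Y<(p-1)^{-1}$ and drives $Y$ up to $(p-1)^{-1}$; on the diagonal $Y=\gamma$ one reduces to a one-variable function and shows it is monotone, landing at the corner $(p-1)^{-1}$; the edge $Y=0$ is excluded since $\eta\ne 0$ makes $L$ strictly decreasing in $Y$ there. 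The main obstacle I anticipate is the bookkeeping in the $\gamma$-derivative: showing that the messy combination of derivatives of $(\gamma/(\gamma+1))^{p-2}$ and $(2-p)\gamma^{p-1}+\gamma^{p-2}$ telescopes into the clean ratio $(1-(p-1)Y)(1+Y)^{p-1}/\big((1-(p-1)\gamma)(1+\gamma)^{p-1}\big)=Z/|\zeta|^p$ of \eqref{intr}, and simultaneously keeping track of which factor the second equation of \eqref{intr} is used to simplify; a secondary nuisance is justifying that the minimum is attained in the interior in case (ii) rather than on the boundary, which should follow from the strict inequalities $Z>|\zeta|^p$, $H>|\eta|^q$ forcing $L$ to increase as $(\gamma,Y)$ approaches the edges $\gamma=(p-1)^{-1}$ and $Y=0$.
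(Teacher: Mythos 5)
Your plan is in the same general spirit as the paper's (locate the minimizer via first-order conditions), but it misses the organizational device that the paper uses to keep the case analysis small, and it contains one substantive imprecision that will bite when you try to carry out the part (i) boundary analysis.

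The paper first computes $\partial L/\partial\gamma$ and factors it as $(2-p)\gamma^{p-3}(1-(p-1)\gamma)\big[\kappa(Y)/\kappa(\gamma)-Z/|\zeta|^p\big]$. Since the prefactor is positive and $\kappa$ is strictly decreasing with $\kappa((p-1)^{-1})=0$, for every fixed $Y\in[0,(p-1)^{-1})$ the one-variable restriction $\gamma\mapsto L(\gamma,Y)$ has a unique minimizer $\gamma(Y)$ lying \emph{strictly inside} $(Y,(p-1)^{-1})$. That single observation removes both the diagonal $Y=\gamma$ and the edge $\gamma=(p-1)^{-1}$ (for $Y<(p-1)^{-1}$) from contention, leaving only three candidates: $(\gamma(0),0)$, the corner $((p-1)^{-1},(p-1)^{-1})$, or an interior critical point. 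The paper then discards $(\gamma(0),0)$ via the sign of $\partial L/\partial Y$ as $Y\to0$, and distinguishes (i) from (ii) by whether the two equations of \eqref{intr} are jointly consistent. Your proposal instead checks each boundary edge separately, including a monotonicity claim along the diagonal $Y=\gamma$ and a sign analysis of $\partial L/\partial Y$ along $\gamma=(p-1)^{-1}$, neither of which you verify and both of which the paper's device renders unnecessary.

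The imprecision: you assert that $\partial L/\partial Y=0$ ``rearranges, after clearing the common $p$-th-root factor, into precisely the second equation of \eqref{intr}.'' It does not, in isolation. Setting $\partial L/\partial Y=0$ gives
\begin{equation*}
|\eta|=H^{1/q}\left(\frac{\gamma}{\gamma+1}\right)^{p-2}(1+Y)^{p-2}\,Y\cdot\big(\text{inner expression}\big)^{(1-p)/p},
\end{equation*}
and the inner expression (the quantity inside the $p$-th root) reduces to the clean form $\gamma^p Z/|\zeta|^p$ only after you substitute the first equation of \eqref{intr}, i.e.\ only after also imposing $\partial L/\partial\gamma=0$. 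For an interior critical point, where both conditions hold, this is harmless and \eqref{intr} emerges as you expect. But in your part (i) argument you propose to read off the sign of $\partial L/\partial Y$ along the edge $\gamma=(p-1)^{-1}$ by ``the comparison of $\delta(Y)$ with $(|\eta|^qZ/|\zeta|^pH)^{1/q}\delta(\gamma)$''; this comparison only governs the sign of $\partial L/\partial Y$ along the curve $\gamma=\gamma(Y)$, not along an arbitrary edge. That gap needs to be closed before the boundary sweep can be trusted. The well-definedness check via the majorization \eqref{maj} that you add at the start is correct and is a reasonable small supplement the paper leaves implicit.
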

\begin{proof}
Observe first that $L$ is continuous, so its minimum is attained and hence $(\gamma_0,Y_0)$ exists. A little computation shows that if $Y$ lies in the interval $ [0,(p-1)^{-1})$ and $\gamma\in (Y,(p-1)^{-1})$, then
\begin{align*}
 \frac{\partial L(\gamma,Y)}{\partial \gamma}&=(2-p)\gamma^{p-3}(1-(p-1)\gamma)\left[\frac{(1+Y)^{p-1}(1-(p-1)Y)}{(1+\gamma)^{p-1}(1-(p-1)\gamma)}-\frac{Z}{|\zeta|^p}\right]\\
 &=(2-p)\gamma^{p-3}(1-(p-1)\gamma)\left[\frac{\kappa(Y)}{\kappa(\gamma)}-\frac{Z}{|\zeta|^p}\right],
 \end{align*}
where $\kappa$ is the function introduced in the proof of Lemma \ref{auxilem}. This function is decreasing and vanishes at $(p-1)^{-1}$, so  for each $Y$ as above there is a unique $\gamma(Y)\in (Y,(p-1)^{-1})$ at which the partial derivative vanishes. Here the one-dimensional restriction $\gamma\mapsto L(\gamma,Y)$ attains its minimum. Therefore, we have one of three possibilities for the location of $(\gamma_0,Y_0)$. Namely, 
\begin{itemize}
\item[a)] $(\gamma_0,Y_0)=(\gamma(0),0)$,
\item[b)] $(\gamma_0,Y_0)=((p-1)^{-1},(p-1)^{-1})$
\end{itemize}
\begin{itemize}
\item[c)] $(\gamma_0,Y_0)$ lies in the triangle $\big\{(\gamma,Y):0<Y<\gamma<(p-1)^{-1}\big\}$.
\end{itemize}
The first possibility is easily  ruled out.  To see this, we compute that
\begin{align*}
&\frac{\partial L(\gamma,Y)}{\partial Y}=-|\eta|+H^{1/q}\left(\frac{\gamma}{\gamma+1}\right)^{p-2}(1+Y)^{p-2}Y\times\\
&\times\bigg(\left(\frac{\gamma}{\gamma+1}\right)^{p-2}(1+Y)^{p-1}\left(Y-\frac{1}{p-1}\right)+\frac{(2-p)\gamma^{p-1}+\gamma^{p-2}}{p-1}\frac{Z}{|\zeta|^p}\bigg)^{1/p-1},
\end{align*}
which becomes negative when $Y\to 0$. Thus, b) or c) holds true.

If $|\eta|^qZ<|\zeta|^pH$, we easily check that ${\partial L}/{\partial Y}$ is positive when $\gamma$, $Y$ are sufficiently close to $(p-1)^{-1}$ and hence b) is impossible. Therefore, c) must hold and $(\gamma_0,Y_0)$ satisfies
$$ \frac{\partial L(\gamma_0,Y_0)}{\partial \gamma}=\frac{\partial L(\gamma_0,Y_0)}{\partial Y}=0.$$
One easily verifies that this condition is precisely \eqref{intr}. 

It remains to consider the case $|\eta|^qZ\geq |\zeta|^pH$. Suppose that c) holds. Then $(\gamma_0,Y_0)$ would have to satisfy \eqref{intr}. But the first equality in this system would imply $\gamma_0>Y_0$ (by $Z/|\zeta|^p>1$ and the aforementioned monotonicity of $\kappa$), while the second equality would give $\gamma_0\leq Y_0$ (we have $|\eta|^pZ/(|\zeta|^qH)\geq 1$ and the function $\delta$ of Lemma \ref{auxilem} is increasing). The contradiction shows that b) must be true, and this completes the proof of the lemma.
\end{proof}

Finally, let us state a simple fact, the proof of which is left to the reader.

\begin{lemma}\label{monga}
The function 
$$\gamma\mapsto \frac{(2-p)\gamma^{p-1}+\gamma^{p-2}}{p-1}$$
is strictly decreasing on the interval $(0,(p-1)^{-1}]$ and its value at $(p-1)^{-1}$ equals $(p-1)^{-p}$.
\end{lemma}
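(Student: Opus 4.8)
The plan is to treat Lemma \ref{monga} as a routine one-variable calculus computation, with the standing assumption $1<p<2$ (so that $p-1\in(0,1)$ and $p-2\in(-1,0)$) doing all the sign work. Write $\phi(\gamma)=\frac{(2-p)\gamma^{p-1}+\gamma^{p-2}}{p-1}$ for $\gamma\in(0,(p-1)^{-1}]$.

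First I would evaluate the right endpoint. Since $(p-1)^{-(p-2)}=(p-1)\,(p-1)^{-(p-1)}$, factoring $(p-1)^{-(p-1)}$ out of the numerator gives
$$\phi\big((p-1)^{-1}\big)=\frac{(p-1)^{-(p-1)}\big[(2-p)+(p-1)\big]}{p-1}=\frac{(p-1)^{-(p-1)}}{p-1}=(p-1)^{-p},$$
which is the asserted value.

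For strict monotonicity I would differentiate directly:
$$\phi'(\gamma)=\frac{(2-p)(p-1)\gamma^{p-2}+(p-2)\gamma^{p-3}}{p-1}=\frac{(p-2)\,\gamma^{p-3}\big[1-(p-1)\gamma\big]}{p-1},$$
where the last equality uses the factorization $(2-p)(p-1)\gamma+(p-2)=(p-2)\big[1-(p-1)\gamma\big]$. On the open interval $(0,(p-1)^{-1})$ each factor has a definite sign: $\gamma^{p-3}>0$, $p-1>0$, $1-(p-1)\gamma>0$, and $p-2<0$; hence $\phi'(\gamma)<0$ there. Combined with the continuity of $\phi$ on $(0,(p-1)^{-1}]$, this yields that $\phi$ is strictly decreasing on the whole interval.

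There is essentially no obstacle here; the only point deserving a moment of care is the algebraic factorization of the numerator of $\phi'$ so that its sign is transparent, together with the observation that the constraint $1<p<2$ is precisely what forces each of the relevant factors to carry the needed sign. (One may also note in passing that $\phi$ stays positive on the interval, so it is genuinely the positive coefficient appearing in Theorem \ref{auxth}.)
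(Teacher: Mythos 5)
Your proof is correct: the endpoint evaluation and the factorization $\phi'(\gamma)=\frac{(p-2)\,\gamma^{p-3}\bigl[1-(p-1)\gamma\bigr]}{p-1}$ both check out, and the sign analysis under $1<p<2$ is exactly right. The paper explicitly leaves this lemma's proof to the reader, so there is no argument to compare against; yours is the standard one-variable calculus verification one would expect.
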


We now proceed to the bound $\mathbb{B}_p\leq \mathcal{B}_p$. Fix $\zeta,\,\eta\in\mathcal{H}$ and $Z> |\zeta|^p$, $H> |\eta|^q$, and pick martingales $f$, $g$, $h$ as in the definition of $\mathbb{B}_p(\zeta,\eta,Z,H)$. Clearly, we may assume that $g_0\equiv 0$ as the formula does not depend on the starting variable of $g$. By H\"older inequality, we see that for any $\gamma\in (0,(p-1)^{-1}]$ and any $y\in \mathcal{H}$ such that $\langle y,\eta\rangle=|y||\eta|$, we have
\begin{align*}
\E \langle g_\infty,h_\infty\rangle& =- \langle y,\eta\rangle+\E \langle g_\infty+y,h_\infty\rangle\\
&\leq -|y||\eta|+\left(\E |g_\infty+y|^p\right)^{1/p}H^{1/q}\\
&\leq -|y||\eta| + \left(\E |g_\infty+y|^p-\frac{(2-p)\gamma^{p-1}+\gamma^{p-2}}{p-1}(\E |f_\infty|^p-Z)\right)^{1/p}H^{1/q}\\
&\leq -|y||\eta|+H^{1/q}\left(b_{p,\gamma}(\zeta,y)+\frac{(2-p)\gamma^{p-1}+\gamma^{p-2}}{p-1}Z\right)^{1/p},
\end{align*}
where in the last line we have used \eqref{aux_in2}. It will be convenient to write $b_{p,\gamma}^\R$ to indicate that we consider the function $b_{p,\gamma}$ defined on $\R\times \R$.
The above chain of inequalities, combined with \eqref{defBB}, implies that
\begin{equation}\label{infimum}
\begin{split}
& \mathbb{B}_p(\zeta,\eta,Z,H)\\
&\leq \inf\left\{-s|\eta|+H^{1/q}\left(b_{p,\gamma}^\R(|\zeta|,s)+\frac{(2-p)\gamma^{p-1}+\gamma^{p-2}}{p-1}Z\right)^{1/p}\right\},
\end{split}
\end{equation}
where the infimum is taken over all $\gamma\in (0,(p-1)^{-1}]$ and all $s\geq 0$. 
The remainder of this section is devoted to showing that this infimum is equal to $\mathcal{B}_p(\zeta,\eta,Z,H)$. For the sake of convenience and clarity, we again  split the reasoning into separate steps. 

\smallskip

\noindent \emph{Step 1. The case $\zeta=0$.} Then we have $b_{p,\gamma}^\R(|\zeta|,s)=s^p$. Furthermore,
\begin{equation*}\label{monot}
\frac{(2-p)\gamma^{p-1}+\gamma^{p-2}}{p-1}\geq (p-1)^{-p}\qquad \mbox{for }\gamma\in (0,(p-1)^{-1}],
\end{equation*}
by virtue of Lemma \ref{monga}. 
 Consequently, we see that the infimum in \eqref{infimum} equals
\begin{equation}\label{infim}
 \inf_{s\geq 0}\left(-s|\eta|+ H^{1/p}(s^p+(p-1)^{-p}Z)^{1/p}\right). 
\end{equation}
However, a straightforward analysis of the derivative shows that the expression in  parentheses attains its minimal value for $s$ satisfying $s^p=(p-1)^{-p}|\eta|^qZ/(H-|\eta|^q)$. Plugging this $s$ into the expression in \eqref{infim}, we get that this infimum equals
$$ \frac{Z^{1/p}(H-|\eta|^q)^{1/q}}{p-1}=\mathcal{B}_p(0,\eta,Z,H).$$

\smallskip

\noindent \emph{Step 2. The case $\zeta\neq 0$, $|\zeta|^pH\leq |\eta|^qZ$.} In this case the function $b_{p,\gamma}^\R$ is homogeneous of order $p$. Take $|\zeta|$ out from the expression on the right in \eqref{infimum} to get that 
\begin{equation}\label{infimum2}
\begin{split}
& \mathbb{B}_p(\zeta,\eta,Z,H)\\
&\leq |\zeta|\inf\left\{-Y|\eta|+H^{1/q}\left(b_{p,\gamma}^\R(1,Y)+\frac{(2-p)\gamma^{p-1}+\gamma^{p-2}}{p-1}\frac{Z}{|\zeta|^p}\right)^{1/p}\right\}\\
&=|\zeta|\inf w(\gamma,Y),
\end{split}
\end{equation}
where the infimum is taken over the set $\{(\gamma,Y):\gamma\in (0,(p-1)^{-1}],\,Y=s/|\zeta|\geq 0\}$. Let us analyze the function $w$ separately on the following subsets of this domain. 
\begin{align*}
S_1&=\{(\gamma,Y): 0\leq Y\leq \gamma\leq (p-1)^{-1},\,Y<\gamma^{-1}\},\\
S_2&=\big\{(\gamma,Y):\gamma<\min\{Y,(p-1)^{-1}\}\big\},\\
S_3&=\{(\gamma,Y):\gamma=(p-1)^{-1},\, Y\geq (p-1)^{-1}\}.
\end{align*}
First, note that the infimum in \eqref{infimum2} cannot be attained on  $S_1$. This follows from Lemma \ref{aux_lemma} (i), since for $Y\leq \gamma$ we have $w=L$. On the other hand, the infimum cannot be attained on $S_2$ either. Indeed, for $(\gamma,Y)\in S_2$ we have
$$ w(\gamma,Y)=-Y|\eta|+H^{1/q}\left(Y^p+\frac{(2-p)\gamma^{p-1}+\gamma^{p-2}}{p-1}\left(\frac{Z}{|\zeta|^p}-1\right)\right)^{1/p},$$
which is strictly decreasing with respect to $\gamma$ as seen from Lemma \ref{monga}. Therefore, we see that during the computation of the right-hand side of \eqref{infimum2}, we may assume that $\gamma=(p-1)^{-1}$ and $Y\geq (p-1)^{-1}$. This leads us to the problem of finding the minimal value of the function
\begin{equation}\label{defF}
 F(Y)=
-Y|\eta|+ H^{1/q}\left(Y^p+(p-1)^{-p}\left(Z/{|\zeta|^p}-1\right)\right)^{1/p}
\end{equation}
on $[(p-1)^{-1},\infty)$. A straightforward analysis shows that this function attains its minimum at 
\begin{equation}\label{specY}
 Y=(p-1)^{-1}\left(\frac{Z-|\zeta|^p}{H-|\eta|^q}\frac{|\eta|^q}{|\zeta|^p}\right)^{1/p}.
\end{equation}
We also note that this value of $Y$ is at least $(p-1)^{-1}$, by the assumption $|\zeta|^pH\leq |\eta|^qZ$. It suffices to note that the minimum is precisely 
$$\frac{(Z-|\zeta|^p)^{1/p}(H-|\eta|^q)^{1/q}}{p-1}=\mathcal{B}(\zeta,\eta,Z,H).$$ 

\smallskip

\noindent \emph{Step 3. The case $\zeta\neq 0$, $|\eta|^qZ<|\zeta|^pH$.} We proceed as previously and observe that \eqref{infimum2} holds true.  We now  analyze $w$ on the sets 
\begin{align*}
S_1&=\{(\gamma,Y): 0\leq Y\leq \gamma\leq (p-1)^{-1}\},\\
S_2&=\big\{(\gamma,Y):\gamma<\min\{Y,(p-1)^{-1}\}\big\},\\
S_3&=\{(\gamma,Y):\gamma=(p-1)^{-1},\, Y> (p-1)^{-1}\}, 
\end{align*}
 separately. On the first set, we make use of Lemma \ref{aux_lemma}. Then  we have $w=L$, so by part (ii) of that statement the infimum (at least over $S_1$) is attained at the point satisfying \eqref{intr}. The same analysis as above shows that the set $S_2$ does not contribute to the infimum. Thus, all that remains is to check the behavior of $w$ on $S_3$ and this leads us to the function $F$ given by \eqref{defF}. However, this function is strictly increasing on $[(p-1)^{-1},\infty)$ (since $|\eta|^qZ<|\zeta|^pH$, the point $Y$ given by \eqref{specY} lies below $(p-1)^{-1}$) and hence the claim follows.

\section{Proof of $\mathbb{B}_p\geq \mathcal{B}_p$}

We now turn our attention to the proof of the lower bound for $\mathbb{B}_p$, which will show that the functions $\mathbb{B}_p$ and $\mathcal{B}_p$ actually coincide. As is usual, this will be accomplished by constructing appropriate examples. Fix a small $\delta>0$, numbers $\gamma\in (0,(p-1)^{-1})$, $Y\in [0,\gamma)$ and let $(\mathfrak{f},\mathfrak{g})$ be a Markov martingale with values in  $[0,\infty)\times \R$, satisfying the following conditions:
\begin{itemize}
\item[(i)] We have $(\mathfrak{f}_0,\mathfrak{g}_0)\equiv (1,Y)$.
\item[(ii)] A point of the form $(x,y)$ with $0<y<\gamma x$, leads to $\left(\frac{x+y}{\gamma+1},\frac{\gamma(x+y)}{\gamma+1}\right)$ or to $(x+y,0)$.
\item[(iii)] A point of the form $(x,y)$ with $-\gamma x<y<0$, leads to $\left(\frac{x-y}{\gamma+1},\frac{\gamma(-x+y)}{\gamma+1}\right)$ or to $(x-y,0)$.
\item[(iv)] A point of the form $(x,0)$ leads to $(x(1+\delta),\delta x)$, $(x(1+\delta),-\delta x)$, $\left(\frac{x}{\gamma+1},\frac{\gamma x}{\gamma+1}\right)$ or to $\left(\frac{x}{\gamma+1},-\frac{\gamma x}{\gamma+1}\right)$, with probabilities $\gamma/(2\gamma+2\delta(\gamma+1))$, $\gamma/(2\gamma+2\delta(\gamma+1))$, $\delta(\gamma+1)/(2\gamma+2\delta(\gamma+1))$ and $\delta(\gamma+1)/(2\gamma+2\delta(\gamma+1))$, respectively.
\item[(v)] All the points not mentioned in (ii) and (iii) are absorbing.
\end{itemize}
We need not specify the probabilities in (ii) and (iii).  These   are uniquely determined by the martingale property. To gain some intuition about this martingale pair, let us briefly describe its behavior for $Y>0$. The pair starts from $(1,Y)$ and it moves along the line of slope $-1$, either to the point on the line $y=\gamma x$, or to the $x$-axis. If the first possibility occurs, the pair stops and if it moves to the $x$-axis (so it is at the point $(1+Y,0)$ at that  moment), it continues to evolve  as follows. We pick independently the random slope $s\in \{-1,1\}$ (each choice has probability $1/2$) and then move the pair $(\mathfrak{f},\mathfrak{g})$ along the line of slope $s$, either to the point on the line $y=-s\gamma x$, or to the point $(1+Y+\delta,\delta s)$. If the pair visits the line $y=-s\gamma  x$, the evolution stops. Otherwise, the pair moves along the line of slope $-s$, either to the line $y=s\gamma x$ or to $(1+Y+2\delta,0)$. In the first case the evolution stops, while in the second, we pick a new random slope $s$, and the pattern is repeated.

Let us list several properties of $(\mathfrak{f},\mathfrak{g})$, which follow directly from the above definition. First, it is easy to see that $|d\mathfrak{g}_n|\equiv |d\mathfrak{f}_n|$ for each $n\geq 1$. Second, the above analysis clearly shows that $(\mathfrak{f},\mathfrak{g})$ converges almost surely to a random variable $(\mathfrak{f}_\infty,\mathfrak{g}_\infty)$ satisfying $|\mathfrak{g}_\infty|=\gamma \mathfrak{f}_\infty$,  almost surely. The final observation is that conditionally on the set $\{\mathfrak{g}_1=0\}$, the random variable $\mathfrak{g}_\infty$ is symmetric, while on $\{\mathfrak{g}_1>0\}$, the random variable  equals  $\gamma(1+Y)/(\gamma+1)$. Consequently, we get
\begin{align*}
 \E \mathfrak{g}_\infty|\mathfrak{g}_\infty|^{p-2}=\E \big\{\E \big[\mathfrak{g}_\infty|\mathfrak{g}_\infty|^{p-2}|\mathfrak{g}_1\big]\big\}&=\left(\frac{\gamma(1+Y)}{\gamma+1}\right)^{p-1}\mathbb{P}\left(\mathfrak{g}_1=\frac{\gamma(1+Y)}{\gamma+1}\right)\\
 &=Y\left(\frac{\gamma(1+Y)}{\gamma+1}\right)^{p-2}.
\end{align*}

In what follows, we will require the asymptotic behavior of the $p$-th moment of $\mathfrak{f}_\infty$ as $\delta\to 0$. It will be convenient to use the notation $A \simeq B$ when $\lim_{\delta\to 0}A/B=1$. Directly from (i)-(v), we derive  that $\mathbb{P}(\mathfrak{f}_\infty\geq (1+Y)/(\gamma+1))=1$ and, for $k\geq 1$,
$$ \mathbb{P}\left(\mathfrak{f}_\infty\geq \frac{1+Y}{\gamma+1}(1+2\delta)^{k}\right)=\frac{\gamma-Y}{(1+Y)(\gamma+\delta(\gamma+1))}\mathcal{P}^{k-1},$$
where
$$ \mathcal{P}=\frac{\gamma+\delta(\gamma-1)}{(1+2\delta)(\gamma+\delta(\gamma+1)}.$$
Substituting this gives 
$$ \mathbb{P}\left(\mathfrak{f}_\infty=\frac{1+Y}{\gamma+1}\right)\simeq \frac{(\gamma+1)Y}{(1+Y)\gamma}$$
and, for $k\geq 1$,
\begin{align*}
 \mathbb{P}\left(\mathfrak{f}_\infty=\frac{1+Y}{\gamma+1}(1+2\delta)^k\right)
=\frac{\gamma-Y}{(\gamma+\delta(\gamma+1))(1+Y)}\mathcal{P}^{k-1}(1-\mathcal{P}).
\end{align*}
Integrating these distributions gives that 
\begin{align*}
\E |\mathfrak{f}_\infty|^p&\simeq \frac{(\gamma+1)Y}{(1+Y)\gamma}\left(\frac{1+Y}{\gamma+1}\right)^p+\frac{\gamma-Y}{\gamma(1+Y)}\sum_{k=1}^\infty \left(\frac{1+Y}{\gamma+1}(1+2\delta)^k\right)^p\mathcal{P}^{k-1}(1-\mathcal{P})\\
&\simeq \frac{Y}{\gamma}\left(\frac{1+Y}{\gamma+1}\right)^{p-1}\\
&\quad +\left(\frac{1+Y}{\gamma+1}\right)^{p-1}\frac{\gamma-Y}{\gamma^2}\cdot 2\delta\sum_{k=1}^\infty \left[\frac{(1+2\delta)^{p-1}(\gamma+\delta(\gamma-1))}{\gamma+\delta(\gamma+1)}\right]^{k-1}\\
&\simeq \frac{Y}{\gamma}\left(\frac{1+Y}{\gamma+1}\right)^{p-1}+\left(\frac{1+Y}{\gamma+1}\right)^{p-1}\frac{\gamma-Y}{\gamma}\times\\
&\qquad \qquad \qquad \times\frac{ 2\delta}{\gamma(1-(1+2\delta)^{p-1})+\delta(\gamma+1)-\delta(\gamma-1)(1+2\delta)^{p-1}}\\
&\simeq \frac{Y}{\gamma}\left(\frac{1+Y}{\gamma+1}\right)^{p-1}+\frac{\gamma-Y}{\gamma(1-\gamma(p-1))}\left(\frac{1+Y}{\gamma+1}\right)^{p-1}\\
&=\left(\frac{1+Y}{\gamma+1}\right)^{p-1}\frac{1-(p-1)Y}{1-(p-1)\gamma}.
\end{align*}
Here in the third passage we have used the fact that $\gamma<(p-1)^{-1}$.  This   guarantees that the geometric series converges and that the martingale $\mathfrak{f}$ is bounded in $L^p$. 

Equipped with the above facts concerning $(\mathfrak{f},\mathfrak{g})$, we are ready to prove the estimate $\mathbb{B}_p\geq \mathcal{B}_p$. Pick $(\zeta,\eta,Z,H)\in \mathcal{D}$ with $Z>|\zeta|^p$, $H>|\eta|^q$ and assume first that $|\eta|^qZ<|\zeta|^pH$. Let us decrease $Z$ and $H$ a little: that is, choose $\bar{Z}\in (|\zeta|^p,Z)$ and $\bar{H}\in (|\eta|^q,H)$ for which the condition $|\eta|^q\bar{Z}<|\zeta|^p\bar{H}$ is still satisfied. Let $\gamma$, $Y$ be the numbers given by  the system \eqref{intr} (with the parameters $\zeta$, $\eta$, $\bar{Z}$ and $\bar{H}$). Put $f=\zeta \mathfrak{f}$, $g=|\zeta|\eta'\mathfrak{g}$ and let $h$ be the martingale adapted to the filtration of $f$ and $g$, with the terminal value $h_\infty$ given by
$$ h_\infty= \mathfrak{g}_\infty|\mathfrak{g}_\infty|^{p-2}\cdot \left(\frac{\bar{H}|\zeta|^p}{\bar{Z}\gamma^p}\right)^{1/q}\eta'.$$
Here $\eta'=\eta/|\eta|$ if $\eta\neq 0$, and $0'$ would be  an arbitrary vector of length $1$. Since $\mathfrak{g}_\infty$ belongs to $L^p$, the martingale $h$ is bounded in $L^q$. 
We have $\E f_\infty=\zeta\E \mathfrak{f}_\infty=\zeta$ and  furthermore, as $\delta$ approaches $0$, the $p$-th moment $\E |f_\infty|^p$ converges to $\bar{Z}$ (by the above calculation). Therefore we have $\E |f_\infty|^p\leq Z$,  for sufficiently small $\delta$. But 
$$ \E h=\left(\frac{\bar{H}|\zeta|^p}{\bar{Z}\gamma^p}\right)^{1/q}Y\left(\frac{\gamma(1+Y)}{1+\gamma}\right)^{p-2}\eta'=\frac{\bar{H}^{1/q}|\zeta|^{p/q}}{\bar{Z}^{1/q}}\frac{Y(1+Y)^{p-2}}{\gamma(1+\gamma)^{p-2}}\eta'=
|\eta|\eta'=\eta,$$
where in the third passage we again used  \eqref{intr}. Furthermore, we have
$$ \E |h|^q=\frac{\bar{H}|\zeta|^p}{\bar{Z}\gamma^p}\E |\mathfrak{g}_\infty|^p \xrightarrow{\delta\to 0} \frac{\bar{H}|\zeta|^p}{\bar{Z}}\cdot \left(\frac{1+Y}{\gamma+1}\right)^{p-1}\frac{1-(p-1)Y}{1-(p-1)\gamma}= \bar{H},$$
and hence $\E |h|^q\leq H$,  if $\delta$ is small enough. Therefore, by the very definition of $\mathbb{B}_p$,
\begin{align*}
 \mathbb{B}(\zeta,\eta,Z,H)&\geq \E \langle g_\infty,h_\infty\rangle-\langle \E g_\infty,\E h_\infty\rangle\\
&=\E |\mathfrak{g}_\infty|^p\cdot |\zeta|\left(\frac{\bar{H}|\zeta|^p}{\bar{Z}\gamma^p}\right)^{1/q}-|\eta\zeta|Y. \\
\end{align*}
However, as we have already observed above, $\E |\mathfrak{g}_\infty|^p$ converges to $\bar{Z}\gamma^p/|\zeta|^p$,  as $\delta\to 0$. This implies
 $$  \mathbb{B}_p(\zeta,\eta,Z,H)\geq \frac{\bar{Z}\gamma^p}{|\zeta|^p}|\zeta|\left(\frac{\bar{H}|\zeta|^p}{\bar{Z}\gamma^p}\right)^{1/q}-|\eta\zeta|Y=\mathcal{B}_p(\zeta,\eta,\bar{Z},\bar{H}).$$
Letting $\bar{Z}\to Z$, $\bar{H}\to H$ gives the desired inequality $ \mathbb{B}_p(\zeta,\eta,Z,H)\geq \mathcal{B}_p(\zeta,\eta,Z,H).$ 

Finally, we turn our attention to the case $|\eta|^qZ\geq |\zeta|^pH$. As before, we slightly decrease $Z$ and $H$,  picking $\bar{Z}\in (|\zeta|^p,Z)$, $\bar{H}\in (|\eta|^q,H)$ such that $|\eta|^q\bar{Z}\geq |\zeta|^p\bar{H}$. We will need  the following modification of the above martingale pair $(\mathfrak{f},\mathfrak{g})$. Let $Y$ be the number given by \eqref{specY} (with $Z$, $H$ replaced by $\bar{Z}$, $\bar{H}$), fix $\gamma<(p-1)^{-1}$ and take
$$ \e=(1-(p-1)\gamma)\cdot \frac{Y(1+\gamma)^{p-1}}{(1+Y)^p}\left(\frac{\bar{Z}}{|\zeta|^p}-1\right).$$
Let $(\mathfrak{f},\mathfrak{g})$ be a martingale satisfying three conditions. 
\begin{itemize}
\item[(i)] $(\mathfrak{f}_0,\mathfrak{g}_0)=(1,Y)$.
\item[(ii)] At the first step, the pair moves to $(1-\e,Y+\e)$ or to $(1+Y,0)$.
\item[(iii)] Starting with the second step, the pair moves according to the rules (ii)-(v) listed in the previous case.
\end{itemize}
As before we easily see that the condition $|d\mathfrak{g}_n|=|d\mathfrak{f}_n|$, $n\geq 1$, is satisfied. 
Now, put $f=\zeta \mathfrak{f}$, $g=|\zeta|\eta'\mathfrak{g}$ and let $h$ be the martingale with the terminal random variable
$$h_\infty=Y^{-1}(Y+\e)^{2-p}\mathfrak{g}_\infty|\mathfrak{g}_\infty|^{p-2}\eta.$$ We have $\E f_\infty=\zeta\E\mathfrak{f}_\infty=\zeta$ and, by the above definition of $\e$,
$$ \E |f_\infty|^p=\frac{Y}{Y+\e}|\zeta|^p(1-\e)^p+\frac{\e}{Y+\e}|\zeta|^p\frac{(1+Y)^p}{(1-(p-1)\gamma)(1+\gamma)^{p-1}} \to \bar{Z},$$
as $\delta\to 0$ and $\gamma\to (p-1)^{-1}$. Next, we check that 
$$ \E h_\infty=\frac{Y}{Y+\e}\cdot \frac{Y+\e}{Y}\eta=\eta.$$
By \eqref{specY} we also have 
\begin{align*}
 \E |h_\infty|^q=|\eta|^qY^{-q}(Y+\e)^{(2-p)q}\E |\mathfrak{g}_\infty|^p&\to \frac{|\eta|^p}{Y^p}\left[Y^p+\gamma^p\left(\frac{\bar{Z}}{|\zeta|^p}-1\right)\right]\\
 &=|\eta|^p+\frac{(p-1)^{-p}\left(\bar{Z}/|\zeta|^p-1\right)|\eta|^p}{Y^p}=\bar{H},
\end{align*}
as $\delta\to 0$ and $\gamma\to (p-1)^{-1}$. Consequently, since $\bar{Z}<Z$ and $\bar{H}<H$, we can write, for $\delta$ sufficiently small and $\gamma$ sufficiently close to $(p-1)^{-1}$, 
\begin{align*}
 \mathbb{B}(\zeta,\eta,Z,H)&\geq \E \langle g_\infty,h_\infty\rangle-\langle \E g_\infty,\E h_\infty\rangle \\
&=  |\zeta||\eta|\frac{(Y+\e)^{2-p}}{Y}\E |\mathfrak{g}_\infty|^p-|\zeta||\eta|Y.
\end{align*}
Letting $\delta\to 0$ and $\gamma \to (p-1)^{-1}$ (then $\e\to 0$), we see that the latter expression converges to
 \begin{align*}
\frac{|\zeta||\eta|}{Y^{p-1}}\left[Y^p+(p-1)^{-p}\left(\frac{\bar{Z}}{|\zeta|^p}-1\right)\right]-|\zeta||\eta|Y
 &=\frac{|\zeta||\eta|(p-1)^{-p}(\bar{Z}/|\zeta|^p-1)}{Y^{p-1}}.
\end{align*}
Plugging the formula \eqref{specY} for $Y$ we obtain $\mathbb{B}_p(\zeta,\eta,Z,H)\geq \mathcal{B}_p(\zeta,\eta,\bar{Z},\bar{H})$. It remains to let $\bar{Z}\to Z$ and $\bar{H}\to H$ to  complete the proof.


\begin{thebibliography}{9}

\bibitem{BanJan}{R. Ba\~nuelos and P. Janakiraman}, {\it $L^p$-bounds for the Beurling-Ahlfors transform}, Trans. Amer. Math. Soc. \textbf{360} (2008), 3604-3612.


\bibitem{BanOse} R. Ba\~nuelos and A. Os\c ekowski,  {\it Sharp Martingale Inequalities and Applications to Riesz Transforms on Manifolds, Lie Groups and Gauss Space} (submitted)


\bibitem{Bu1}{D. L. Burkholder, {\it 
 Boundary value problems and sharp inequalities for martingale transforms}, 
Ann. Probab. \textbf{12} (1984), 647--702.}
\bibitem{Bu2}{D. L. Burkholder, {\it A proof of Pe{\l}czy\'nski's conjecture for the Haar system}, Studia Math. \textbf{91} (1988), 268--273.}
\bibitem{Bu3}{D. L. Burkholder, {\it Explorations in martingale theory and its applications}, \'Ecole d'Ete de Probabilit\'es de Saint-Flour XIX---1989, pp. 1--66, Lecture Notes in Math., \textbf{1464}, Springer, Berlin, 1991.}
\bibitem{DV}{O. Dragi\v{c}evi\'c and A. Volberg, {\it Bellman function and dimensionless estimates of classical and Ornstein-Uhlenbeck Riesz transforms}, J. of Oper. Theory \textbf{56} (2006), pp. 167-198.}
\bibitem{M}{J. Marcinkiewicz, {Quelques th\'eor\`emes sur les s\'eries orthogonales}, { Ann. Soc. Polon. Math. }\textbf{16} (1937), 84--96.}
\bibitem{Mau}{B. Maurey, {\it Syst\`eme de Haar}, S\'eminaire Maurey-Schwartz, 1974--1975, Ecole Polytechnique, Paris, 1975.}
\bibitem{NT} F. L. Nazarov and S. R. Treil, \emph{The hunt for a Bellman function: applications to estimates for singular integral operators and to other classical problems of harmonic analysis,} St. Petersburg Math. J. {\bf 8} (1997), pp. 721--824. 
\bibitem{NTV}{F. L. Nazarov, S. R. Treil and A. Volberg, {\it The Bellman functions and two-weight inequalities for Haar multipliers}, J. Amer. Math. Soc., \textbf{12} (1999), pp. 909--928.}
\bibitem{NV}{F. L. Nazarov and A. Volberg, {\it Heating of the Ahlfors-Beurling operator and estimates of its norm}, St. Petersburg Math. J. \textbf{15} (2004), pp. 563--573.}

\bibitem{Ose}{A. Os\c ekowski, {\it  Sharp martingale and semimartingale inequalities}, Monografie Matematyczne \textbf{72} (2012), Birkh\"auser Basel.}

\bibitem{P}{R. E. A. C. Paley, {\it A remarkable series of orthogonal functions}, { Proc. London Math. Soc. }\textbf{34} (1932), 241--264.}
\bibitem{Sch}{J. Schauder, {\it Eine Eigenschaft des Haarschen Orthogonalsystems}, { Math. Z. }\textbf{28} (1928), 317--320.}
\end{thebibliography}
\end{document}